\newtheorem{theorem}{Theorem}
\newtheorem{corollary}[theorem]{Corollary}
\newtheorem{lemma}[theorem]{Lemma}
\newtheorem{conjecture}[theorem]{Conjecture}
\theoremstyle{definition}
\newtheorem{definition}[theorem]{Definition}
\DeclareMathOperator{\Av}{\operatorname{Av}}
\newcommand\abs[1]{\left|#1\right|}
\newcommand\p[1]{\left(#1\right)}
\title{Pattern avoiding permutations as walks}
\author[A.\ F.\ Frankl\'{i}n]{Atli Fannar Frankl\'{i}n}
\address{Department of Mathematics, University of Iceland, Reykjavik, Iceland}
\email{atlifannar@hi.is}
\begin{document}

\begin{abstract}
  The Stanley-Wilf limit of the pattern $1324$ is known to lie between $10.271$ and $13.5$.
  We obtain lower bounds on this limit by encoding permutations as walks in directed
  graphs: building a permutation by successive insertion of maxima corresponds to
  traversing edges, and the growth rate of walks equals the spectral radius of the
  adjacency matrix. For $1324$, this graph is too large for direct computation,
  so we pass to a quotient graph with weighted edges. Conditional on a natural
  conjecture, this yields a lower bound of $10.418$. 
\end{abstract}

\maketitle
\thispagestyle{empty}

\section{Introduction}

A permutation $\pi$ contains a pattern $\tau$ if some subsequence of $\pi$ has the same relative order as $\tau$, otherwise $\pi$ avoids $\tau$. For example $\pi = 465213$ contains the pattern $\tau = 132$ as the relative order of the elements $465$ is the same as $132$. If we swap the first and third elements such that $\pi= 564213$ then $\pi$ avoids $\tau$. We write $\Av_n(\tau)$ for the set of $\tau$-avoiding permutations of length $n$. The Stanley-Wilf limit of $\tau$ is

\[L(\tau) \coloneqq \lim_{n \rightarrow \infty} \sqrt[n]{\abs{\Av_n(\tau)}}\] 

By the Marcus-Tardos theorem \cite{marcus, arratia}, this limit exists for every pattern $\tau$.

For length $3$ patterns, all Stanley-Wilf limits equal $4$, the growth rate of the Catalan numbers. For length $4$, all patterns have been exactly enumerated and their limits been found \cite{bonaexact, gesselexact} except one: up to symmetry, $1324$ is the only length $4$ pattern whose Stanley-Wilf limit remains unknown. The best current bounds are $10.271 \leq L(1324) \leq 13.5$ \cite{bounds}, with numerical evidence suggesting a value near 11.6 \cite{estimate}.

We develop a method for bounding Stanley-Wilf limits from below by encoding pattern-avoiding permutations as walks in directed graphs. Any permutation can be built by starting from a single element and repeatedly inserting a new maximum. Each insertion corresponds to an edge in a graph whose vertices are equivalence classes of intermediate permutations. The exponential growth rate of walks equals the spectral radius of the adjacency matrix, and the Perron-Frobenius theorem together with the Collatz-Wielandt formula provide tools to bound this from below. To some extent, our approach resembles taht of Albert, Elder, Rechnitzer, Wextcott and Zabrocki \cite{lockseq}: we construct a sequence of increasing lower bounds through computational methods. Our method builds on that paper, but uses the simpler insertion scheme convered by Poh \cite{topinsert} which considers how one can make state machines encoding $1324$-avoiding permutations via repeatedly inserting a new maxmimum element.

For $1324$-avoiding permutations, the natural graph is too large for direct computation, so we pass to a quotient graph with weighted edges. We conjecture that the sum of weighted walks is at most the number of unweighted walks (Conjecture~\ref{conj}); conditional on this, we obtain a lower bound of $10.418$ on $L(1324)$.

Section 2 develops the method through three examples: $\Av(213)$, $\Av(2134)$ and $\Av(3124)$. Section 3 treats $1324$-avoiding permutations, establishing the combinatorial machinery and stating the conjecture. Section 4 discusses open questions.

\section{Motivating examples}

First we summarise notation and definitions for classical permutation patterns, following Bevan  \cite{basicnotation}. Two sequences $a_1, \dots, a_m$ and $b_1, \dots, b_m$ are \textit{order-isomorphic} if $a_i < a_j$ holds if and only if $b_i < b_j$ for all $i, j$. For permutations $\pi = \pi_1 \dots \pi_n$ and $\tau = \tau_1 \dots \tau_m$, $\pi$ \textit{contains the pattern} $\tau$ if there exists a set of indices $i_1, \dots, i_m$ such that $\pi_{i_1}, \dots, \pi_{i_m}$ is order-isomorphic to $\tau_1 \dots \tau_m$. We call such a set of indices an \textit{occurrence} of the pattern $\tau$ in $\pi$. If $\pi$ has no occurrence of $\tau$ then $\pi$ \textit{avoids} $\tau$.

In an effort to bound the growth rate of $1324$-avoiding permutations we will look at encoding the permutations as walks in directed graphs. Traversing an edge in this graph will correspond to inserting a new maximum element in the permutation. Clearly we can construct any given nonempty permutation by inserting a new maximum element over and over, starting from the trivial permutation. The \textit{trivial permutation} is the unique permutation of a single element. To illustrate the construction, we examine $\Av(213)$
as a motivating example. 

As a naïve example we could imagine creating a directed graph with the elements of $\Av(213)$ as vertices, and placing an edge from $\pi$ to $\tau$ if $\tau$ can be obtained by inserting a new maximum element into $\pi$. Our numerical methods will only work for finite graphs, so we will have to define a cutoff $N$, and only work with vertices corresponding to permutations with $\leq N$ elements. Clearly the number of walks in this smaller graph is a lower bound for the walks in the full graph. This reduced graph will still be much too large for our purposes, so we will try to reduce the size of the graph without losing any of the information contained in our walks.

\begin{figure}[h]
\centering
\begin{tikzpicture}
\node[state] (1) at (0, 0) {$1$};

\node[state] (12) at (1, 1) {$12$};
\node[state] (21) at (1, -1) {$21$};

\node[state] (123) at (2.5, 2.5) {$123$};
\node[state] (132) at (2.5, 1) {$132$};
\node[state] (312) at (2.5, -0.5) {$312$};
\node[state] (231) at (2.5, -2) {$231$};
\node[state] (321) at (2.5, -3.5) {$321$};

\draw (1) edge[->] (12);
\draw (1) edge[->] (21);
\draw (21) edge[->] (231);
\draw (21) edge[->] (321);
\draw (12) edge[->] (123);
\draw (12) edge[->] (132);
\draw (12) edge[->] (312);
      
\node (etc) at (3.5, 0) {\Huge $\Rightarrow$};

\node[state] (1b) at (4.5, 0) {$1$};

\node[state] (12b) at (5.5, 1) {$12$};
\node[state] (21b) at (5.5, -1) {$2$};

\node[state] (123b) at (7, 2.5) {$123$};
\node[state] (132b) at (7, 1) {$13$};
\node[state] (312b) at (7, -0.5) {$3$};
\node[state] (231b) at (7, -2) {$23$};
\node[state] (321b) at (7, -3.5) {$3$};

\draw (1b) edge[->] (12b);
\draw (1b) edge[->] (21b);
\draw (21b) edge[->] (231b);
\draw (21b) edge[->] (321b);
\draw (12b) edge[->] (123b);
\draw (12b) edge[->] (132b);
\draw (12b) edge[->] (312b);

\node (etc) at (8, 0) {\Huge $\Rightarrow$};

\node[state] (1c) at (9, 0) {$1$};

\node[state] (12c) at (10, 1) {$12$};

\node[state] (123c) at (11.5, 2.5) {$123$};

\draw (1c) edge[->, bend left=20] (12c);
\draw (1c) edge[loop below] (1c);
\draw (12c) edge[->, bend left=20] (123c);
\draw (12c) edge[->, loop below] (12c);
\draw (12c) edge[->, bend left=20] (1c);
\draw (123c) edge[->, bend left=20] (12c);
\draw (123c) edge[->, loop below] (123c);
\draw (123c) edge[->, bend right=60] (1c);
\end{tikzpicture}
\caption{Construction of $\Av(213)$ graph with cutoff $N = 3$}
\label{g213}
\end{figure}
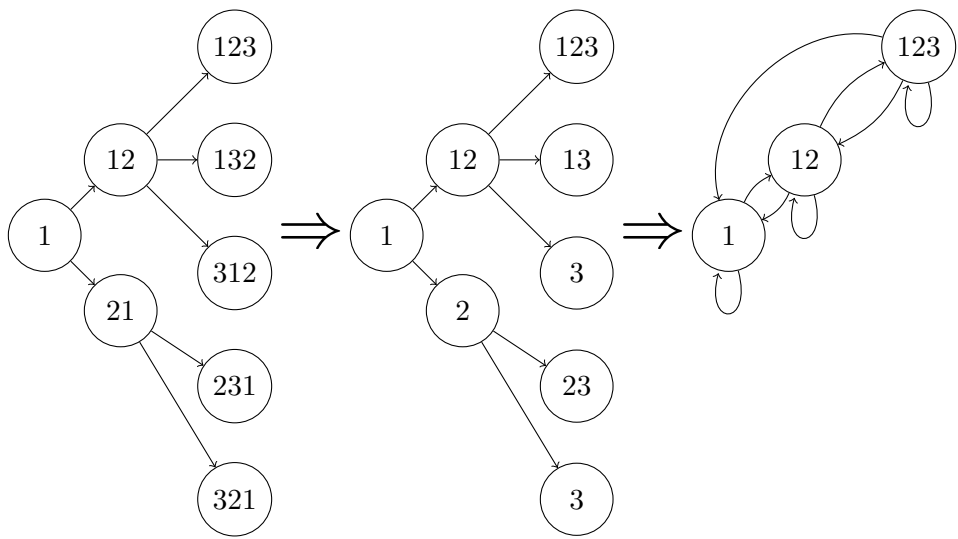

We note that inserting a new maximum element to the right of an instance of the pattern $21$ in a permutation necessarily creates an instance of the pattern $213$. So if we want to avoid the pattern $213$ we have to insert any new maximum elements to the left of all instances of the pattern $21$. This means that for the purposes of our walks, any elements in a permutation after the first instance of $21$ are entirely irrelevant. Thus we can consider only the maximal prefix that is in $\Av(21)$ and then standardise. Standardising a sequence of $n$ distinct values means renaming the elements to obtain the order-isomorphic permutation on $1, 2, \dots, n$. Finally we can equate vertices that correspond to the same permutation, reducing our graph to just having vertices in $\Av(21)$, see Figure~\ref{g213}.

As illustrated by the edge from 123 to 1, we also consider the edges that would have been present were the cutoff value one greater, in case that points back into the lower cutoff. That edge would have gone from $123$ to $4123$, which becomes an edge from $123$ to $1$. Assuming we have no cutoff, the walks in this new graph are in one-to-one correspondence with walks in the original graph, so it suffices to count walks in the new graph. Clearly the number of walks in the cutoff graph is a lower bound for the walks in the full graph. We also see that each vertex corresponds to an increasing permutation $12\dots n$. The outgoing edges from $12 \dots n$ will point to all smaller permutations, the permutation itself and the permutation of one size greater, as inserting a new maximum element at index $i$ will create a permutation of size $i$.

Now that we have our graph and a description of its edges, the next matter is to enumerate walks starting at the vertex $1$. We can consider the adjacency matrix $A$ of our graph. The entries of $A^k$ give us the walks of length $k$ in our graph, so we are interested in the growth rate of the entries of $A^n$ as $n \rightarrow \infty$. To this end we use the Perron-Frobenius theorem \cite{frobenius, perron} . We need the more general form later stated by Frobenius as our later matrices will have many zero entries, and the original version by Perron covers only positive matrices. The portion of the theorem we need can be phrased as follows:

\begin{theorem}[Perron-Frobenius Theorem]
	Let $A$ be an aperiodic irreducible non-negative $N \times N$ matrix with spectral radius $r$ (largest absolute value among all eigenvalues). Then $r\in\mathbb{R}_{>0}$ is an eigenvalue of $A$ which we call the \textit{Perron-Frobenius eigenvalue}. This eigenvalue is simple and both the left and right eigenspaces are one-dimensional. The corresponding left and right eigenvectors are all non-negative, and moreover these are the only eigenvectors of $A$ whose components are all non-negative. If we denote these left and right eigenvectors by $v, w$ and normalise them so $w^Tv = 1$ then $\lim_{n \rightarrow \infty} A^n/r^n = vw^T$.
\end{theorem}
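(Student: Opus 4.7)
The plan is to follow the classical Collatz-Wielandt and cone-theoretic approach, which splits naturally into three parts: (i) existence of a positive eigenvalue equal to $r$ with a non-negative eigenvector, (ii) positivity, uniqueness and simplicity of that eigenvector, and (iii) the asymptotic limit. First I would introduce the Collatz-Wielandt functional
\[
  f(x) \;=\; \min_{i:\,x_i>0}\,\frac{(Ax)_i}{x_i}
\]
on nonzero $x \geq 0$. Since $f$ is homogeneous it suffices to maximise over the compact simplex $\Delta = \{x \geq 0 : \sum_i x_i = 1\}$, on which $f$ is upper semi-continuous, so the supremum $r$ is attained at some $v \in \Delta$. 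A short perturbation argument then forces $Av = rv$: if $(Av - rv)_i > 0$ for some $i$, adding a small multiple of the $i$-th standard basis vector to $v$ strictly increases $f$, contradicting maximality.

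Next I would use irreducibility in the form $(I+A)^{N-1} > 0$ entrywise. Applying this to $v \geq 0$ gives $(1+r)^{N-1} v = (I+A)^{N-1} v > 0$, so $v$ is strictly positive. Running the same argument on $A^T$ produces a strictly positive left eigenvector $w$ with some eigenvalue $r'$, and computing $w^T A v$ two ways forces $r = r'$; we then normalise so that $w^T v = 1$. For any complex eigenvalue $\lambda$ with $Au = \lambda u$, the estimate $|\lambda| \sum_i w_i |u_i| \leq \sum_i w_i (A|u|)_i = r \sum_i w_i |u_i|$ shows $|\lambda| \leq r$, so $r$ is the spectral radius. Uniqueness of the non-negative eigenvector follows by re-running the maximality argument on any candidate, and algebraic simplicity of $r$ follows from differentiating $\det(tI - A)$ at $t = r$ and using the adjugate formula together with the strict positivity of $v$ and $w$.

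Finally, for the asymptotic formula, aperiodicity plays its role in upgrading $|\lambda| \leq r$ to the strict inequality $|\lambda| < r$ for every $\lambda \neq r$. Once this spectral gap is in hand, write $A = r\,v w^T + R$ where $v w^T$ is the rank-one spectral projector onto the Perron eigenspace; the eigenvector equations $Av = rv$ and $w^T A = r w^T$ give $R v = 0$ and $w^T R = 0$, so the two summands annihilate one another and $A^n = r^n v w^T + R^n$. Dividing by $r^n$ and using $\rho(R)/r < 1$ then yields the claimed limit.

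The main obstacle I expect is the aperiodicity step in part (iii): deducing the strict inequality $|\lambda| < r$ for non-Perron eigenvalues. The standard route is to chase the equality case in the triangle-inequality bound above; equality forces the phases of the components of $u$ to line up across each row of the support of $A$, and unwinding this constraint produces a cyclic permutation of coordinates and hence a period dividing $\gcd$ of cycle lengths, contradicting aperiodicity. The argument is entirely elementary but surprisingly fiddly to write cleanly; the alternative is to import the Frobenius normal form for irreducible matrices wholesale and read off the spectral gap from the cyclic block structure.
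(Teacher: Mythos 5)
The paper does not prove the Perron--Frobenius theorem; it states it as a classical result, cites Perron and Frobenius, and uses it as a black box throughout. So there is no in-paper proof to compare your sketch against.

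Your outline is the standard Collatz--Wielandt / cone-theoretic proof and is sound in structure, but one step needs tightening: the perturbation argument you use to force $Av = rv$. Adding $\epsilon e_i$ to $v$ at a coordinate with $(Av - rv)_i > 0$ only gives $f(v + \epsilon e_i) \geq r$, not strict inequality, because a coordinate $j$ with $(Av)_j = r v_j$ and $A_{ji} = 0$ is left unaffected, so the minimum defining $f$ may not move. The standard fix, which you already have available since you invoke $(I+A)^{N-1} > 0$ for positivity, is to apply it here too: from $Av - rv \geq 0$ and $\neq 0$, irreducibility gives $(I+A)^{N-1}(Av - rv) > 0$ entrywise, so $v' = (I+A)^{N-1} v$ satisfies $Av' > r v'$ componentwise and hence $f(v') > r$, a genuine contradiction with maximality. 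With that repair the remaining parts --- strict positivity of $v$ and $w$, the spectral-radius bound via pairing with $w$, algebraic simplicity from the adjugate and the derivative of the characteristic polynomial, and the rank-one decomposition $A^n = r^n v w^T + R^n$ with $\rho(R) < r$ coming from aperiodicity --- are all correct and standard, including your observation that the fiddly part is extracting the strict spectral gap from the equality case of the triangle inequality.
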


Irreducibility is equivalent to every vertex being able to reach every other vertex, which is called \textit{strong connectivity}, while for the aperiodicity it will suffice that some vertex has an edge pointing to itself, called a \textit{loop}. Since all vertices can reach $1$ in a single step and $1$ has a loop, this is satisfied. The theorem also tells us that the number of walks starting at $1$ of length $n$ must grow like $\lambda^n$ where $\lambda$ is the Perron-Frobenius eigenvalue. Thus each cutoff $N$ gives us some Perron-Frobenius eigenvalue $\lambda_N$ that will be a lower bound for the Stanley-Wilf limit.

To determine the Perron-Frobenius eigenvalue of a graph we make use of the Collatz-Wielandt formula \cite{collatz, wielandt}. The result can be stated as follows:

\begin{theorem}[Collatz-Wielandt Formula]
	Let $A$ be a matrix with Perron-Frobenius eigenvalue $r$. For a non-zero vector $v$ with non-negative entries, the minimum value of $(Av)_i / v_i$ taken over $i$ such that $v_i \neq 0$ is $\leq r$.
\end{theorem}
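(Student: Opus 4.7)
The plan is to produce the inequality $Av \geq mv$ componentwise, where $m$ denotes the claimed minimum, and then pair it against the left Perron-Frobenius eigenvector to turn the componentwise inequality into a scalar comparison between $r$ and $m$.

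First I would set $m = \min\{(Av)_i/v_i : v_i \neq 0\}$ and verify that $Av \geq mv$ entrywise. For indices $i$ with $v_i > 0$ this is immediate from the definition of $m$. For indices $i$ with $v_i = 0$ the right-hand side is zero while the left-hand side is non-negative, since $A$ and $v$ both have non-negative entries. Hence $Av - mv \geq 0$ componentwise.

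Next, let $w$ be the left Perron-Frobenius eigenvector guaranteed by the previous theorem, so that $w^T A = r w^T$. A key input is that $w$ may be taken to be strictly positive: a non-negative left eigenvector of an irreducible matrix with a zero component would contradict irreducibility, so the non-negativity from Perron-Frobenius actually sharpens to strict positivity. Multiplying the inequality $Av \geq mv$ on the left by $w^T$ preserves the inequality (since $w \geq 0$), yielding $w^T A v \geq m \cdot w^T v$, i.e., $r \cdot (w^T v) \geq m \cdot (w^T v)$. Since $w > 0$ and $v \geq 0$ is nonzero, $w^T v > 0$, so I may divide to obtain $r \geq m$.

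There is no real obstacle here; the argument is just a standard pairing trick. The only mild subtlety is the strict positivity of $w$, which has to be extracted from the irreducibility hypothesis rather than being recorded directly in the theorem as quoted above. In the settings of Section~2 irreducibility is verified explicitly (every vertex reaches the loop at $1$), so the hypothesis is safely in force and the argument applies without modification.
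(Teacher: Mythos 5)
The paper cites the Collatz--Wielandt formula as a classical result (referencing Collatz and Wielandt) without supplying a proof, so there is nothing in the paper to compare against. Your argument is correct and is in fact the standard proof: establish $Av \geq mv$ componentwise, pair against a strictly positive left Perron eigenvector $w$, and divide by $w^T v > 0$. You also correctly flag the one point requiring care beyond the paper's stated version of Perron--Frobenius, namely that irreducibility upgrades the non-negativity of $w$ to strict positivity, which is what makes $w^T v > 0$ and hence the final division legitimate.
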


Thus if we produce a vector $v$ such that $Av$ is componentwise $\geq \rho v$ for some real value $\rho$, then the Perron-Frobenius eigenvalue is at least $\rho$. In this case we will show that the limit is at least $\rho = 4$, which is of course also its true value.

If we take $v = (1, 1, 2/3, (2/3)^2, (2/3)^3, \dots)$, cutting the vector off to the right length, we claim we get our desired bound. The sum of the coordinates of $v$ is then $\alpha \coloneqq 4 - 2^{N-1}/3^{N-2}$ for cutoff $N$. By direct calculation we get that 
\[Av = \p{\alpha, \alpha, \alpha - 1, \alpha -  1 - \frac{2}{3}, \alpha -  1 - \frac{2}{3} - \p{\frac{2}{3}}^2, \dots}\]
As $N \rightarrow \infty$, the ratios $(Av)_i / v_i$ all approach $4$ from below. Therefore it will be a lower bound for the growth rate, so the growth rate must be at least $4$. Thus in the case of $\Av(213)$ we recover the exact Stanley-Wilf limit.

Next we consider a bigger example, $\Av(2134)$. This is known to have a Stanley-Wilf limit of $9$ due to Regev \cite{regev}. If we apply the same kind of construction as above we will have a graph with vertices in $\Av(213)$ up to some cutoff. There are however some small changes that have to be made, see Figure~\ref{g2134v1}.

\begin{figure}[h]
	\centering
	\begin{tikzpicture}
		\node[state, red] (1) at (0, 0) {$1$};
		
		\node[state, red] (12) at (-2, -1) {$12$};
		\node[state] (21) at (2, -1) {$21$};
		
		\node[state, red] (123) at (-3, -2.5) {$123$};
		\node[state] (132) at (-1.5, -2.5) {$132$};
		\node[state] (312) at (0, -2.5) {$312$};
		\node[state] (231) at (1.5, -2.5) {$231$};
		\node[state] (321) at (3, -2.5) {$321$};
		
		\draw (1) edge[->] (12);
		\draw (1) edge[->] (21);
		\draw (21) edge[->] (231);
		\draw (21) edge[->, bend left = 20] (321);
		\draw (21) edge[->, loop right] (21);
		\draw (12) edge[->] (123);
		\draw (12) edge[->] (132);
		\draw (12) edge[->] (312);
		
		\draw (132) edge[->, loop below] (132);
		\draw (312) edge[->, loop below] (312);
		\draw (231) edge[->, loop below] (231);
		\draw (321) edge[->, loop below] (321);
		\draw (312) edge[->] (21);
		\draw (321) edge[->, bend left = 20] (21);
	\end{tikzpicture}
	\caption{Graph of $\Av(2134)$ with cutoff $N = 3$, version one}
	\label{g2134v1}
\end{figure}
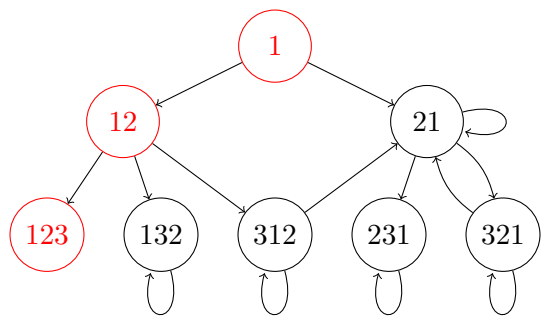

The vertices corresponding to increasing permutations are not strongly connected to the rest of the graph, causing an issue. Suppose we have two permutations $\pi, \tau \in \Av(213)$, each with at least one inversion, and we want to find a path from $\tau$ to $\pi$ in the graph. Then we can insert each element of $\pi$ at the start of $\tau$, and then a new maximum after that. For example if we started at $\tau = 1243$ and our target were $\pi= 132$ we would make the walk 
\[1243 \rightarrow 51243 \rightarrow 561243 \rightarrow 5761243 \rightarrow 57681243\]
$768$ is an occurrence of $213$, so the last vertex is actually $576$, which is standardised as $132$. Thus aside from the increasing permutations, any vertex can reach any other vertex. As we are calculating a lower bound, we can fix this problem by simply omitting the increasing permutations.

We make one more modification to our construction process that did not come up for $\Av(213)$. Suppose we have some walk starting at a vertex corresponding to the permutation $\pi$. If $\pi$ is order-isomorphic to some prefix of another permutation $\tau$, then all the insertions into $\pi$ could be done on $\tau$ as well. This way we can map the walks starting at $\pi$ injectively to the walks starting at $\tau$. Therefore if $\pi$ has an edge to $\tau$ in the full graph but $\tau$ is not contained in the cutoff graph, we could retain more information by having an edge from $\pi$ to the maximum prefix of $\tau$ that is still in the graph. For example if our graph has cutoff $N = 3$ and we are looking at $\pi= 321$ and insert a maximum to get $3421$, then this is not in our graph. So we instead put an edge from $\pi$ to $342$, which is standardised as $231$. This modified graph is shown in Figure~\ref{g2134v2}. Some edges that do not contribute to strong connectivity are omitted for clarity. 

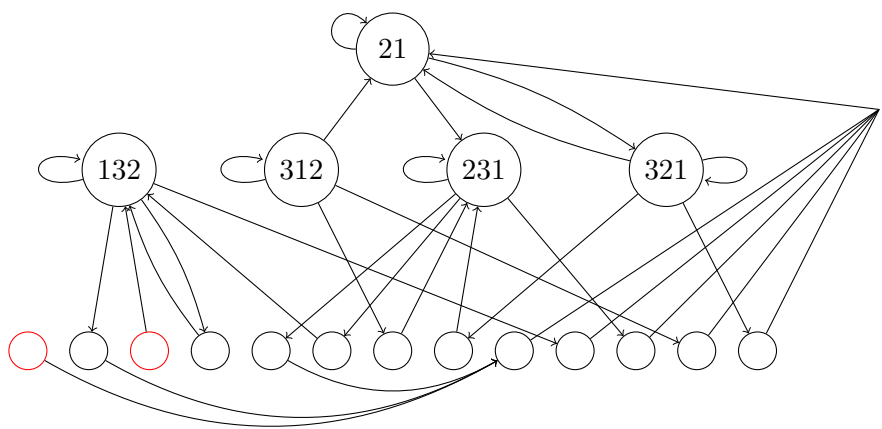
\begin{figure}[h]
	\centering
	\begin{tikzpicture}[scale=0.8]
		\node[state] (21) at (0, 0) {$21$};
		
		\node[state] (132) at (-4.5, -2) {$132$};
		\node[state] (312) at (-1.5, -2) {$312$};
		\node[state] (231) at (1.5, -2) {$231$};
		\node[state] (321) at (4.5, -2) {$321$};
		
		\node[state,minimum size=0.5cm, red] (1243) at (-6, -5) {};
		\node[state,minimum size=0.5cm] (1342) at (-5, -5) {};
		\node[state,minimum size=0.5cm, red] (1423) at (-4, -5) {};
		\node[state,minimum size=0.5cm] (1432) at (-3, -5) {};
		\node[state,minimum size=0.5cm] (2341) at (-2, -5) {};
		\node[state,minimum size=0.5cm] (2431) at (-1, -5) {};
		\node[state,minimum size=0.5cm] (3412) at (0, -5) {};
		\node[state,minimum size=0.5cm] (3421) at (1, -5) {};
		\node[state,minimum size=0.5cm] (4123) at (2, -5) {};
		\node[state,minimum size=0.5cm] (4132) at (3, -5) {};
		\node[state,minimum size=0.5cm] (4231) at (4, -5) {};
		\node[state,minimum size=0.5cm] (4312) at (5, -5) {};
		\node[state,minimum size=0.5cm] (4321) at (6, -5) {};
		
		\draw (21) edge[->, in=135, out=180, looseness=4] (21);
		\draw (21) edge[->] (231);
		\draw (21) edge[->, bend left=10] (321);
		
		\draw(132) edge[->, loop left] (132);
		\draw(132) edge[->] (1342);
		\draw(132) edge[->, bend left=10] (1432);
		\draw(132) edge[->] (4132);
		
		\draw(312) edge[->] (21);
		\draw(312) edge[->, loop left] (312);
		\draw(312) edge[->] (3412);
		\draw(312) edge[->] (4312);
		
		\draw(231) edge[->, loop left] (231);
		\draw(231) edge[->] (4231);
		\draw(231) edge[->] (2431);
		\draw(231) edge[->] (2341);
		
		\draw (321) edge[->, bend left=10] (21);
		\draw (321) edge[->, loop right] (321);
		\draw (321) edge[->] (4321);
		\draw (321) edge[->] (3421);
		
		\draw (8,-1) edge[->] (21);
		\draw (4321) -- (8,-1);
		\draw (4312) -- (8,-1);
		\draw (4231) -- (8,-1);
		\draw (4132) -- (8,-1);
		\draw (4123) -- (8,-1);
		\draw (3421) edge[->] (231);
		\draw (3412) edge[->] (231);
		\draw (2431) edge[->] (132);
		\draw (2341) edge[->, bend right=30] (4123);
		\draw (1432) edge[->, bend left=10] (132);
		\draw (1423) edge[->] (132);
		\draw (1342) edge[->, bend right=30] (4123);
		\draw (1243) edge[->, bend right=30] (4123);
 	\end{tikzpicture}
	\caption{Filtered $\Av(2134)$, cutoff $N = 4$, version two}
	\label{g2134v2}
\end{figure}

Two vertices are not strongly connected to the rest, the vertices corresponding to $1423$ and $1243$. But if we were to go up to cutoff $N = 5$ then they would become connected, so at any given point we can just omit the vertices that are not yet strongly connected. This is similar to how $132$ would not be strongly connected to the other length $3$ permutations without the length $4$ permutations.

This construction yields an exponential number of vertices in the cutoff $N$, so we would like to reduce their number somewhat. To this end we consider what happens when a new maximum element is inserted into a $213$-avoiding permutation.

If a new maximum element is inserted into $\pi \in \Av(213)$ one of two things happens. That insertion can happen in an initial increasing run, changing the length of that run and increasing the length of the permutation by 1. Otherwise the insertion happens after a descent so we will have to delete everything after the inserted element, and then the inserted element as well. This decreases the size of the permutation but leaves the initial increasing run unchanged. So if we know the length of the initial increasing run and the length of the permutation before and after the insertion, we can recover where the element was inserted. Applying this inductively, if we know the sequence of permutation sizes and initial run lengths, we can recover the walk in the graph. We define the partition of $\Av_n(213)$ into sets with initial run of length $r$, which we denote $B(n, r)$. Then it suffices to consider the quotient graph with respect to this partition, where we collapse all vertices in the same set $B(n, r)$ into the same vertex. This way, instead of working with a graph with about $4^N$ vertices for a cutoff $N$, we can have about $N^2$ vertices. Now numerical computations suddenly become much more feasible. In terms of the vector we try to find for the Collatz-Wielandt formula, this is equivalent to deciding that we will give all coordinates in the same part $B(n, r)$ get the same value. An example graph can be seen in Figure~\ref{bnrgraph}. The node for $B(5, 4)$ gets omitted because it would not be strongly connected to the rest of the graph.
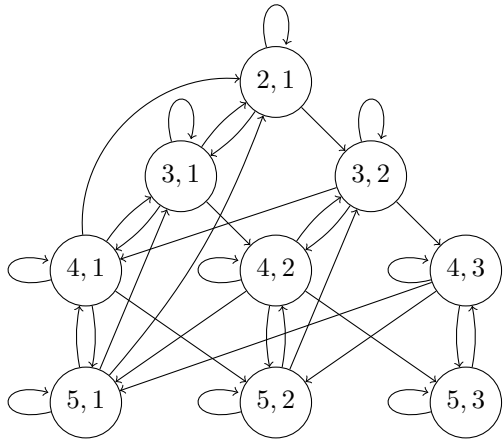
\begin{figure}[h]
	\centering
	\begin{tikzpicture}
		\node[state, minimum size=0.75] (21) at (0, 0) {\small $2,1$};
		\node[state, minimum size=0.75] (31) at (-1.25, -1.25) {\small $3,1$};
		\node[state, minimum size=0.75] (32) at (1.25, -1.25) {\small $3,2$};
		\node[state, minimum size=0.75] (41) at (-2.5, -2.5) {\small $4,1$};
		\node[state, minimum size=0.75] (42) at (0, -2.5) {\small $4,2$};
		\node[state, minimum size=0.75] (43) at (2.5, -2.5) {\small $4,3$};
		\node[state, minimum size=0.75] (51) at (-2.5, -4.25) {\small $5,1$};
		\node[state, minimum size=0.75] (52) at (0, -4.25) {\small $5,2$};
		\node[state, minimum size=0.75] (53) at (2.5, -4.25) {\small $5,3$};
		
		\draw (21) edge[->, bend left=10] (31);
		\draw (21) edge[->] (32);
		\draw (21) edge[->, loop above] (21);
		
		\draw (31) edge[->, bend left=10] (41);
		\draw (31) edge[->] (42);
		\draw (31) edge[->, bend left=10] (21);
		\draw (31) edge[->, loop above] (31);
		
		\draw (32) edge[->] (41);
		\draw (32) edge[->, bend left=10] (42);
		\draw (32) edge[->] (43);
		\draw (32) edge[->, loop above] (32);
		
		\draw (41) edge[->, bend left=10] (51);
		\draw (41) edge[->] (52);
		\draw (41) edge[->, bend left=50] (21);
		\draw (41) edge[->, bend left=10] (31);
		\draw (41) edge[->, loop left] (41);
		
		\draw (42) edge[->] (51);
		\draw (42) edge[->, bend right=10] (52);
		\draw (42) edge[->] (53);
		\draw (42) edge[->, bend left=10] (32);
		\draw (42) edge[->, loop left] (42);
		
		\draw (43) edge[->] (51);
		\draw (43) edge[->] (52);
		\draw (43) edge[->, bend right=10] (53);
		\draw (43) edge[->, loop left] (43);
		
		\draw (51) edge[->, bend right=10] (21);
		\draw (51) edge[->] (31);
		\draw (51) edge[->, bend left=10] (41);
		\draw (51) edge[->, loop left] (51);
		
		\draw (52) edge[->] (32);
		\draw (52) edge[->, bend right=10] (42);
		\draw (52) edge[->, loop left] (52);
		
		\draw (53) edge[->, bend right=10] (43);
		\draw (53) edge[->, loop left] (53);
	\end{tikzpicture}
	\caption{Quotient graph on $B(n, r)$ for cutoff $N = 5$}
	\label{bnrgraph}
\end{figure}

To do the numerical computations we need three things. We need to know the sizes of the sets $B(n, r)$, the number of edges from one vertex $B(n, r)$ to another vertex $B(m, s)$ and we need some numerical method to determine the Perron-Frobenius eigenvalue. This eigenvalue can then be validated using the Collatz-Wielandt formula.

Using the standard bijection between Dyck paths and $213$-avoiding permutations, we have due to Kreweras \cite{kreweras} that $|B(n, r)| = T_{n-1,n-r}$ where 
\[T_{n,k} = \frac{n-k+1}{n+1} \binom{n+k}{n}\]
which will make an appearance again later. From earlier we also know that for any $\pi \in B(n, r)$ there will be $n + 1$ edges out, each of which points to one of the sets $B(n + 1, 1), B(n + 1, 2), \dots, B(n + 1, r + 1)$ or $B(r + 1, r), B(r + 2, r), \dots, B(n, r)$. Thus if $B(m, s)$ is among these $n + 1$ sets, there are $|B(n, r)|$ edges to it from $B(n, r)$ and otherwise zero edges.

Lastly we will use the power method \cite{mises} to calculate the Perron-Frobenius eigenvalue. We start with the vector $v$ of all ones, then repeatedly replace $v$ with $Av$, then normalise. This can be repeated until we get acceptably close to the true eigenvalue, and in the case of a Perron-Frobenius eigenvalue this will always converge to the right value. Now we can use a computer to calculate the growth rate for larger and larger cutoffs $N$, getting Figure~\ref{213lam}.

\begin{figure}[h]
	\centering
	\includegraphics[width=\textwidth]{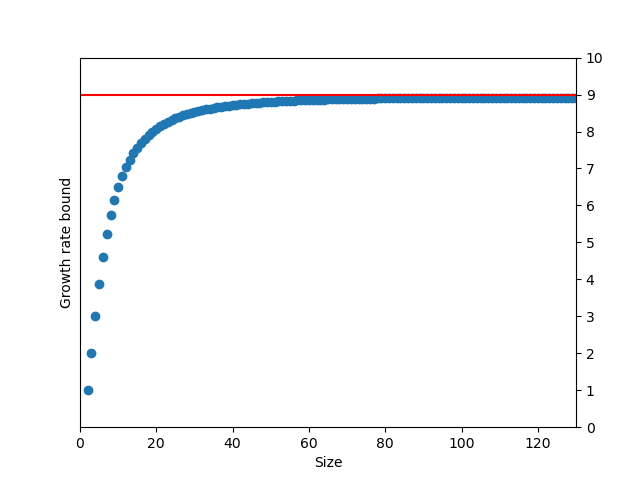}
	\caption{Stanley-Wilf estimate of $2134$ as a function of cutoff}
	\label{213lam}
\end{figure}

This gives a lower bound of $8.90952$ on the growth rate, within $1\%$ of the true value. The computation was simply done on a personal computer in about a minute. A more serious computation could likely get a fair bit closer to the true value. We also note that this is seemingly not the only partition that will work, the same example can be worked through when partitioning on the value of the last element of the permutation rather than the length of the initial run.

As a last motivating example we can do something similar for $\Av(3124)$. Then we are looking at a graph with vertices in $\Av(312)$ and we want to find some analogous partition that allows us to recover the walks. Many different statistics can be considered, but one that works to reduce the graph somewhat is to partition by the descent set. The descent set of a permutation $\pi$ is the set of indices $i$ such that $\pi_i > \pi_{i+1}$. Thus we denote the subset of $\Av_n(312)$ with descent set $S$ by $C(n, S)$. For permutations of size $n$ there are $2^n$ possible descent sets. Thus the size of the graph would grow like $2^N$ rather than $4^N$ as a function of the cutoff $N$. While this is still exponential, it is an improvement. 

Again we have to determine the sizes $|C(n, S)|$ and the number of edges going from some $C(n, S)$ to some $C(m, T)$. Explicit formulas were not derived, but recurrences were sufficient for computing the values on a computer. For brevity we will omit some details here, but $|C(n, S)|$ can be calculated using a three-dimensional push-forward recurrence for a given size $n$ and descent set $S$. Push-forward means that instead of calculating $C(n, S)$ based on smaller cases, each case $C(n, S)$ contributes something to later cases as we calculate them in some order. Then when we arrive at the case $C(n, S)$ it has the correct value stored. Similarly we omit details on the number of edges between sets, but it follows the same pattern as earlier cases. Finally, we must show the walks in the quotient graph are one-to-one with walks in the original graph. That is, given a sequence of sizes and descent sets, we can recover the original sequence of permutations in $\Av(213)$.

We can show this by induction. At the start we know what permutation we have, as it is the trivial permutation, so we just have to show that if we know $\pi$ and its descent set and size after inserting a new maximum, we know what permutation we have after the insertion. If we insert the maximum before some ascent in $\pi$, then we will remove every element after the ascent and the ascent top after the insertion. So we know in what decreasing run of the permutation the insertion was made. But then we can tell where the insertion was made due to the new ascent, which we can read from the descent set. Therefore we can recover where the insertion was made, so the walks are one-to-one between the full graph and the quotient graph.

\begin{figure}[ht]
	\centering
	\includegraphics[width=\textwidth]{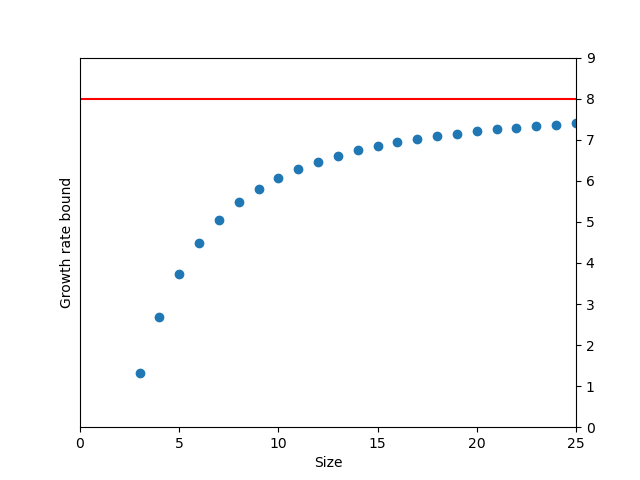}
	\caption{Stanley-Wilf estimate of $3124$ as a function of cutoff}
	\label{312lam}
\end{figure}

All in all this gets us Figure~\ref{312lam}. The true Stanley-Wilf limit is known to be $8$, due to Bóna \cite{bonaexact}. We achieve the bound $7.40341$, which again could be improved with more computation time and more memory.

\section{1324-avoiding Graph}

We now formalise the same construction as in the motivating examples.

\begin{definition}
	Let $\pi$ be a permutation
	whose last element is fixed. The avoider graph of $\pi$ with cutoff $N$ has as vertices
	all nonempty permutations of size at most $N$ avoiding $\pi$. For each permutation $\tau$
	in the graph, there is a directed edge to every permutation obtained by inserting
	a new maximum element into $\tau$ , removing the minimal suffix to avoid $\pi$ and
	maintain size $\leq N$ , then standardising. We denote the number of walks starting from the trivial permutation with $k - 1$ steps with $W_{n, k}(\pi)$.
\end{definition}

The shift in the definition of $W_{n,k}(\pi)$, making it correspond to $k - 1$ steps, is so that $W_{n,n}(\pi) = \abs{\Av_n(\pi)}$. The number of vertices of our graph will grow like $4^N$ when we look at the Stanley-Wilf limit of $1324$, so our numerical methods can only go up to about a cutoff of $N = 17$ before this becomes infeasible due to computational limitations. This small cutoff only yields a lower bound of $8.18$, which is not good enough. We want to do something similar to the grouping $B(n, r)$ and $C(n, S)$ but it seems we cannot get such a lossless partition in this case. For brevity we introduce the notation $\pi \rightarrow \tau$ for $\pi, \tau \in \Av(132)$, denoting that there is an edge from $\pi$ to $\tau$ in the avoider graph with no cutoff. This means that we start with $\pi$ and insert a new maximum to obtain $\pi'$. Next we remove the minimal suffix such that $\pi'$ becomes $132$-avoiding. Finally we standardise and if this results in $\tau$, then $\pi \rightarrow \tau$. 

\begin{theorem}
The multiset $\{|\tau| : \pi \rightarrow \tau\}$ uniquely determines $\pi$ for any $\pi \in \Av(132)$.
\end{theorem}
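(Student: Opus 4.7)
I argue by induction on $n = |\pi|$; the case $n = 1$ is trivial. For $n \geq 2$, use the canonical decomposition $\pi = \pi_L\, n\, \pi_R$, where $n$ is the maximum; the avoidance of $132$ forces $\min \pi_L > \max \pi_R$. Write $\ell = |\pi_L|$ and $M(\pi) = \{|\tau| : \pi \rightarrow \tau\}$. The goal is to extract $\ell$, $M(\pi_L)$, and $M(\pi_R)$ from $M(\pi)$, then apply the inductive hypothesis to $\pi_L$ and $\pi_R$ to reconstruct $\pi$.

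\textbf{Recursion for $M(\pi)$.} Partition the insertion positions $i \in \{1, \dots, n+1\}$ into three regions: $i \leq \ell + 1$ (insertions inside or before the initial block $B_1 = \pi_L n$); $i = \ell + 2$ (the boundary insertion just after $n$); and $i \geq \ell + 3$ (insertions inside $\pi_R$). Direct case analysis of each region, using $\min \pi_L > \max \pi_R$ to confirm that the structural cut between $\pi_L n$ and $\pi_R$ survives any insertion-and-trim, yields
\[
M(\pi) \;=\; \bigl(M(\pi_L) \setminus \{\ell+1\}\bigr) \cup \{n+1\} \cup \operatorname{shift}_{\ell+1}\bigl(M(\pi_R)\bigr),
\]
where $\operatorname{shift}_{\ell+1}$ adds $\ell + 1$ to every non-top entry of $M(\pi_R)$ and relabels the top entry from $|\pi_R| + 1$ to $n + 1$, with the convention $M(\emptyset) = \{1\}$ for degenerate cases.

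\textbf{Reading off $\ell$ from $M(\pi)$.} The multiplicity of $n + 1$ in $M(\pi)$ equals $m + 1$, where $m$ is the number of blocks of $\pi$, so $m$ is visible at a glance. If $m = 1$ then $\pi_R = \emptyset$ and $\ell = n - 1$. If $m \geq 2$, the non-top entries inherited from $M(\pi_L)$ take values in $[2, \ell + 1]$, while those inherited from $\operatorname{shift}_{\ell+1}(M(\pi_R))$ take values in $[\ell + 3, n]$: the value $\ell + 2$ is therefore a structural gap in the non-tops, and exactly $\ell$ non-top entries have value $\leq \ell + 1$. Hence $s = \ell + 1$ is a candidate in $\{1, \dots, n - 1\}$ satisfying (a) $s + 1$ is absent from the non-tops of $M(\pi)$, (b) the non-top count at values $\leq s$ is $s - 1$, and (c) the candidate low and high multisets obtained by splitting $M(\pi)$ at $s$ are both realized by actual $132$-avoiders of the required sizes, the last being verifiable by the inductive hypothesis.

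\textbf{Main obstacle.} The hard part is uniqueness: ruling out a spurious $s \neq \ell + 1$ passing (a) and (b). Condition (c) is the lever---any spurious $s$ would force the inverse-shifted $\pi_R$-multiset to contain, for example, two copies of the size $2$, which is impossible since in any $M(\rho)$ the value $2$ appears at most once (only position $i = 2$ can yield size $2$). Promoting these local obstructions to a general argument, together with bookkeeping for the edge cases $\ell = 0$ and $|\pi_R| = 1$, is the main technical step. Once $\ell$ is pinned down, the multisets $M(\pi_L)$ and $M(\pi_R)$ are recovered from $M(\pi)$ and the inductive hypothesis recovers $\pi_L$ and $\pi_R$, closing the argument.
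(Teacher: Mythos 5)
Your recursion for $M(\pi)$ in terms of $M(\pi_L)$ and $M(\pi_R)$ (via the decomposition on the maximum) is correct, and the observation that $\operatorname{shift}_{\ell+1}$ is in fact just ``add $\ell+1$ to every entry'' is a clean reformulation. But the proof is not complete: the crucial step is exactly the one you flag as ``the main obstacle,'' namely showing that $\ell$ is recoverable from $M(\pi)$, and you do not actually prove it. Conditions (a) and (b) genuinely do admit multiple candidates. For example $\pi = 3241$ has $M(\pi) = \{3,3,5,5,5\}$; here both $s = 1$ and $s = 3$ pass (a) and (b), and ruling out $s = 1$ requires checking that no $132$-avoider $\rho$ of size $3$ has $M(\rho) = \{2,2,4,4\}$. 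Your condition (c) amounts to asking whether a shifted candidate submultiset is \emph{realizable} by some smaller $132$-avoider, which is a different and strictly harder question than the uniqueness statement the inductive hypothesis supplies; the inductive hypothesis tells you that \emph{if} such a $\rho$ exists it is unique, not whether it exists. So the argument as written is circular at the point where it matters most, and the hand-waved ``local obstruction'' about the value $2$ (which, incidentally, fails for $\rho$ of size $1$, where $M(1) = \{2,2\}$) does not scale to a general proof.

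The paper's proof uses the same decomposition but sidesteps this difficulty entirely. It passes to $S' = \{|\pi| - x\}\setminus\{0\}$, interprets $\pi$ as a binary tree, and defines a value $g(v)$ for each node $v$ as a sum along the root-to-$v$ path. It then proves three structural facts --- $g$ is injective on leaves, reverse preorder enumerates nodes in nondecreasing order of $g$, and contributors to $g(w)$ have strictly smaller $g$-value --- and reconstructs the tree \emph{greedily, one node at a time from the sorted multiset}. Because the greedy step places a single node at the unique compatible leaf of the current partial tree, the argument never has to globally identify a split point $\ell$, which is precisely where your top-down reconstruction gets stuck. If you want to salvage your route, the missing piece is a lemma characterizing which multisets arise as $M(\rho)$ for $132$-avoiders $\rho$ (so that (c) becomes checkable), or a direct argument that no two values of $\ell$ can yield the same multiset when both $\pi_L$-halves and $\pi_R$-halves are realizable; neither is provided.
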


\begin{proof}
	Let $S$ be the multiset $\{|\tau| : \pi \rightarrow \tau\}$ for some $\pi$. The multiset $S$ already determines the size of the permutation $\pi$. Therefore we can instead work with $S' \coloneqq \{|\pi| - x : x \in S\} \setminus \{0\}$, the multiset of how many elements get removed. The value $|\pi|$ always appears in $S$ at least twice, for the insertion at the front and at the back, so we can always remove one zero. Removing this one zero means all the elements of $S'$ correspond to an insertion after an element in $\pi$.
	
	We split $132$-avoiding permutations on the maximum value using the classical decomposition. This lets us interpret the permutations as binary trees, letting the trivial permutation correspond to a single node and then using the decomposition. Then everything left of the maximum becomes the left subtree of the root of the binary tree, and everything to the right the other subtree.
	
	Suppose $\pi$ has a maximum at index $m$. Let $\pi_L$ be the permutation on the elements before index $m$ in $\pi$ and $\pi_R$ be the permutation on the elements after. If we insert a new maximum before $m$ and $m \neq 1$ then the new permutation will have an occurrence of $132$. To remove this occurrence by removing a suffix, everything after and including the old maximum must be removed. Therefore the values the elements left of index $m$ contribute to $S'$ are simply $\{|\tau|  + |\pi_R| : \pi_L \rightarrow \tau \}$.
	
	Consider next an insertion after index $m$. Since $\pi$ avoids $132$, every element of $\pi_L$ is larger than every element of $\pi_R$. So even after inserting a new maximum we never have an occurrence of $132$ that involves elements before the maximum. Therefore the values the elements right of the index $m$ contribute to $S'$ are $\{|\tau| : \pi_R \rightarrow \tau \}$.
	
	With this information we can present the map taking $\pi$ to its multiset $S'$ in terms of the binary tree corresponding to $\pi$. We create a map $g$ that assigns a non-negative integer to each node of a binary tree, and let $S'$ be the image of the nodes of the whole binary tree under $g$. For a node $v$ we consider the path $P$ from $v$ to the root, including both $v$ and the root. Then we define $g(v)$ as the sum of size of the right subtree of $w$, plus one for $w$ itself, over all $w$ such that both $w$ and its left child are on the path $P$. The multiset of these numbers will then give $S'$, see Figure~\ref{binsetmap}. A more imperative way to view $g$ is to think of starting at the root and walking down to the vertex $v$. Any time you would move from a parent $p$ to its left child, you add the size of the right subtree of $p$ plus one to a running tally. The total sum you get at the end is then $g(v)$.
	
	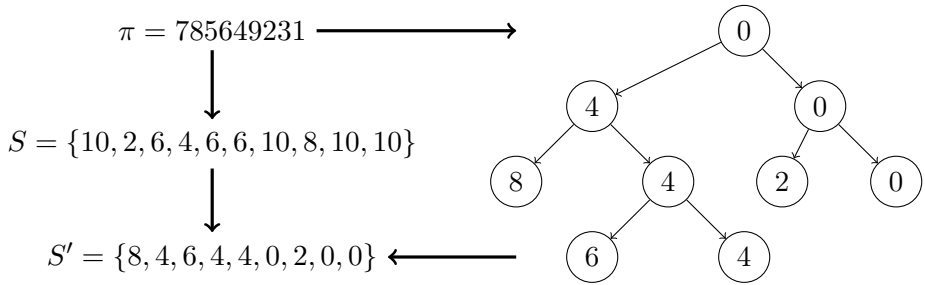
\begin{figure}[h]
		\centering
		\begin{tikzpicture}
			\node[state, minimum size=0.75] (1) at (0, 0) {0};
			\node[state, minimum size=0.75] (2) at (-2, -1) {4};
			\node[state, minimum size=0.75] (3) at (1, -1) {0};
			\node[state, minimum size=0.75] (4) at (-3, -2) {8};
			\node[state, minimum size=0.75] (5) at (-1, -2) {4};
			\node[state, minimum size=0.75] (6) at (2, -2) {0};
			\node[state, minimum size=0.75] (7) at (-2, -3) {6};
			\node[state, minimum size=0.75] (8) at (0, -3) {4};
			\node[state, minimum size=0.75] (9) at (0.5, -2) {2};
			
			\draw (1) edge[->] (2);
			\draw (1) edge[->] (3);
			\draw (2) edge[->] (4);
			\draw (2) edge[->] (5);
			\draw (3) edge[->] (6);
			\draw (5) edge[->] (7);
			\draw (5) edge[->] (8);
			\draw (3) edge[->] (9);
			
			\node (pi) at (-7, 0) {$\pi = 785649231$};
			\node (S1) at (-7, -1.5) {$S = \{10, 2, 6, 4, 6, 6, 10, 8, 10, 10\}$};
			\node (S2) at (-7, -3) {$S' = \{8, 4, 6, 4, 4, 0, 2, 0, 0\}$};
			
			\draw (pi) edge[->, very thick] (S1);
			\draw (pi) edge[->, very thick] (-3, 0);
			\draw (S2) edge[<-, very thick] (S1);
			\draw (S2) edge[<-, very thick] (-3, -3);
		\end{tikzpicture}
		\caption{A permutation, its binary tree and multisets $S, S'$}
		\label{binsetmap}
	\end{figure}
	
	This formulation will allow us to present an inverse, constructing the permutation $\pi$ from the multiset $S'$. But first we need to prove three properties of $g$.
	
	The first one is that $g$ is injective on the leaves. Suppose this is not true and two leaves $v, w$ are mapped to the same value. Consider their lowest common ancestor $\ell$, if this is not the root, we can consider instead the subtree rooted at $\ell$. We can do this because any node outside this subtree either contributes to both $g(v)$ and $g(w)$ or neither. Thus we may assume without loss of generality that $v$ is in the left subtree of the root and $w$ in the right. But $g(w)$ is at most the size of the right subtree, not including the root, and $g(v)$ is at least the size of the right subtree plus the root. This is clearly a contradiction, so all leaves are assigned different values.
	
	The second property we need is that the reverse preorder will visit the nodes in increasing order of the value they map to. The reverse preorder is a recursive procedure for enumerating the vertices of a binary tree. Given a node it enumerates that node, then recursively enumerates the right subtree, then finally recursively enumerates the left subtree. So in Figure~\ref{binsetmap} it would enumerate the values $0, 0, 0, 2, 4, 4, 4, 4, 6, 8$ starting from the root. To prove this we consider some node $v$ and its left and right subtrees. The path from the root to some node in either subtree will contain more nodes, so $g(v)$ cannot be greater than $g(w)$ for $w$ in either subtree. Any $g(w)$ for $w$ in the right subtree will only exceed $g(v)$ by at most the size of the right subtree, while $g(u)$ for $u$ in the left one are at least $g(v)$ plus the size right of the subtree and one more. Thus all the values in the right subtree come before the values in the left subtree, giving us our desired result.
	
	The third and last property we need is that if $v$ is among the nodes that contributes to the sum $g(w)$ for some node $w$, then $g(v) < g(w)$. But if $g(v) < g(w)$, there is some node $u$ on the path from $w$ to the root such that $w$ is in the left subtree of $u$ and $v$ in the right. And by the second property, this means $g(v) < g(w)$.
	
	We consider the values of $S$ in increasing order, denoting them $s_1 \leq s_2 \leq \dots \leq s_k$, and construct the tree one node at a time. Denote the node corresponding to $s_i$ by $v_i$, so we want to satisfy $g(v_i) = s_i$. Because of the third property we proved, we can do this one node at a time, since all the nodes $v_j$ contributing to $s_i$ must have $j < i$. To start we must have $s_1 = 0$, which will correspond to the root $v_1$. As we consider adding $v_i$ to the tree, we know it comes next after $v_{i-1}$ in the reverse preorder traversal of the tree. Thus it is either the right child of $v_i$, or the left child of some node on the path from $v_{i-1}$ to the root.
	
	Consider just the tree consisting of the path from $v_{i-1}$ to the root, then add a left and right child to all nodes in that path where one is not already present. Then all the leaves of this tree are our possible insertion points for $v_i$. But we already know that all leaves correspond to different values, so only one of them can be $s_i$. This uniquely determines where we place $v_i$, before then moving onto $v_{i+1}$ and so on. Finally the third property we proved ensures that $g(v_i) = s_i$ continues to be true as we insert $v_{i+1}, v_{i+2}, \dots$.
	
	Thus we have a binary tree which maps to our set $S$ and it is unique as each node insertion is forced. The $132$-avoiding permutation is uniquely determined as well then, completing our proof.
\end{proof}

This theorem shows we cannot get a partition like in the earlier two cases, where we could massively shrink the number of vertices while maintaining all walks. We will still partition the permutations, but we have to accept getting something approximate rather than exact. But this can still possibly provide a lower bound.

But first one can ask the question of how to find a good partition to try. Ultimately we want to compute some set of weights $w_{\pi}$ on the permutations such that 
\[ \sum_{\tau \rightarrow \pi} \frac{1}{\rho} w_{\tau} \geq w_{\pi}\]
Then the Collatz-Wielandt formula would ensure that $\rho$ is a lower bound for 
the Stanley-Wilf limit of $1324$. Furthermore in the limit of the power method we'd have equality instead of inequality, as we converge to the Perron-Frobenius eigenvector. While these $w_{\pi}$ cannot be calculated directly, as far as we know, a very similar set of weights $w'_{\pi}$ can be found computationally. The set of weights satisfying
\[ \sum_{\tau \rightarrow \pi} \frac{1}{|\tau|} w'_{\tau} = w'_{\pi}\]
is exactly the stationary distribution of the random walk on our graph.

A random walk is a random process, a discrete time Markov chain, in which each state is a vertex of our graph. At any given point the process chooses a neighbour of that vertex uniformly at random and transitions to that state, repeating this infinitely. The stationary distribution then tells us the proportion of time the process spends in any given vertex as time goes to infinity. Because our graph is strongly connected and has a loop a stationary distribution will exist, and can be computed in terms of recurrence times \cite{grimm}. The recurrence time of a vertex $v$ is the expected number of steps it will take to return to $v$ during our random process if we start the process from $v$.

One can calculate that $w'_{\pi}$ appears to only depend on the number of elements of $\pi$ and the number of right-to-left maxima $\pi$. In fact after rescaling the weights $w'_{\pi}$ it appears that they are given by $f(\pi)/|\pi|!$ where $f(\pi)$ is the number of non-right-to-left-maxima. As we will be working with such elements a fair bit going forward, we will call non-right-to-left-maxima \textit{short} values. For $2134$-avoiders, this approach seemingly yields the formula $(n-\ell)/n!$ after scaling, where $\ell$ denotes the last element. The $3124$-avoiding permutations appear to have no such simple formula, though perhaps one can be found in terms of the descent set.

\begin{theorem}
The set of $132$-avoiding permutations of length $n$ with $k$ short values, which we denote $A(n, k)$, has $T_{n-1, k}$ elements where 

\[T_{n, k} = \frac{n - k + 1}{n + 1} \binom{n + k}{n}\]

These values are in the OEIS \cite{oeis} under \texttt{A009766}.
\end{theorem}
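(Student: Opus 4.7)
The plan is to reduce the enumeration to a standard Catalan identity by encoding $132$-avoiders as binary trees and then reading the short-value statistic directly off the tree. Given $\pi \in \Av(132)$ with $\pi = \pi_L \, M \, \pi_R$ (where $M = \max \pi$), recursively form the binary tree $T_\pi$ whose root corresponds to $M$ and whose left and right subtrees are $T_{\pi_L}$ and $T_{\pi_R}$; this is the same classical decomposition already used in the previous theorem. I would first establish that the right-to-left maxima of $\pi$ correspond exactly to the nodes on the \emph{right spine} of $T_\pi$ (the root, its right child, its right grandchild, and so on until the spine terminates). This is a short induction: $M$ is trivially a right-to-left maximum, no element of $\pi_L$ is one since $M$ sits to its right, and the right-to-left maxima of $\pi_R$ remain right-to-left maxima of $\pi$ because the entries to the right of such a position in $\pi$ are exactly the entries to its right in $\pi_R$. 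Hence $\pi \in A(n, k)$ if and only if $T_\pi$ has $n$ nodes and a right spine of length exactly $n - k$.

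Next I would count these trees directly. A binary tree on $n$ nodes with a right spine of length $\ell = n - k$ is specified by the $\ell$ arbitrary left subtrees hanging off the spine nodes, whose sizes partition the remaining $k$ nodes. So the count equals $[z^k]\, C(z)^{n-k}$, where $C(z) = \sum_{m \geq 0} C_m z^m$ is the Catalan generating function. The Raney/cycle-lemma identity
\[ [z^m]\, C(z)^\ell = \frac{\ell}{2m + \ell}\binom{2m + \ell}{m} \]
evaluated at $m = k$, $\ell = n - k$ gives $\dfrac{n-k}{n+k}\binom{n+k}{k}$, and the standard identity $\binom{n+k}{k} = \tfrac{n+k}{n}\binom{n+k-1}{n-1}$ rewrites this as $\dfrac{n-k}{n}\binom{n+k-1}{n-1} = T_{n-1, k}$.

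The only delicate part is the right-spine identification; the rest is a routine invocation of well-known Catalan machinery. A fully bijective alternative would be to compose the binary-tree encoding with a standard bijection from binary trees to Dyck paths and read off a $T_{n-1, k}$ interpretation directly, but the generating-function route above seems shorter and avoids having to pin down which Dyck-path statistic is being tracked. The only real risk is a sign or index slip in the simplification at the end, which is easy to verify against the small cases $n \leq 3$.
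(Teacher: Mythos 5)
Your proof is correct, and although it begins from the same classical decomposition on the maximum that the paper uses, it is executed as a genuinely different argument. The paper stops at a one-step recurrence, $|A(n,k)| = \sum_m C_m\,|A(n-m-1,\,k-m+1)|$, and then asserts the closed form follows ``easily by induction.'' You instead iterate the decomposition all the way down to the full binary-tree encoding, observe that the right-to-left maxima are exactly the nodes on the right spine (which is the one piece of genuine combinatorial content beyond the classical bijection, and your induction for it is sound), reduce the count to $[z^k]\,C(z)^{n-k}$, and invoke the Raney/Lagrange-inversion formula for powers of the Catalan generating function. The final simplification is correct:
\[
\frac{n-k}{n+k}\binom{n+k}{k} \;=\; \frac{n-k}{n}\binom{n+k-1}{n-1} \;=\; T_{n-1,k}.
\]
The payoff of your route is that it lands on the closed form directly, with no inductive verification to carry out and no off-by-one hazards of the kind a bare recurrence invites (indeed, the second argument of the paper's printed recurrence appears to be off by one, which your method never has to touch). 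One tiny wording nit: the sizes of the $\ell$ left subtrees form a \emph{composition} of $k$ into nonnegative parts, not a partition; this is exactly what $[z^k]C(z)^\ell$ encodes, so the computation is unaffected. Both proofs are valid; the paper's is shorter to write down, while yours is more structural and self-contained.
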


\begin{proof}
Suppose we have $\pi \in A(n, k)$. We use the classical decomposition of $132$-avoiding permutations, splitting on the maximum element. Let $\tau_1$ denote everything to the left of the maximum of $\pi$, and $\tau_2$ everything to the right. Every right-to-left maxima of $\tau_2$ will continue to be a right-to-left maxima in $\pi$, but the ones in $\tau_1$ will not because they are to the left of the maximum element of $\pi$. Thus,
\[|A(n, k)| = \sum_{m = 1}^{n-1} C_m |A(n - m - 1, k - m + 1)|\]
where $C_m$ is the $m$-th Catalan number. Using the formula $C_m = \frac{1}{m+1}\binom{2m}{m}$ the desired result follows easily by induction.
\end{proof}

As we saw in the motivating examples we still need one more thing before we can compute the bound with respect to the partition into the sets $A(n, r)$. We need to know the number of edges going from $A(n, r)$ to $A(m, s)$ in the graph for any given integers $n, r, m, s$. But to get there we first need a lemma.

\begin{lemma}\label{differentanr}
	For a permutation $\pi \in \operatorname{Av}_n(132)$, the $n + 1$ permutations $\tau$ such that $\pi \rightarrow \tau$ all belong to different sets $A(m, s)$.
\end{lemma}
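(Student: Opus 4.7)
The plan is to index the $n+1$ candidate targets by the insertion position $i \in \{1, \dots, n+1\}$ of the new maximum and to show the resulting pair $(|\tau_i|, s(\tau_i))$ determines $i$. Before standardization, $\tau_i$ has the form $\pi_1 \dots \pi_{i-1} \cdot (n+1) \cdot \pi_i \dots \pi_{m_i - 1}$, where $m_i = i + q_i$ and $q_i$ is the length of the longest prefix of $\pi_i, \dots, \pi_n$ whose entries all lie strictly below $p_i \coloneqq \min(\pi_1, \dots, \pi_{i-1})$; use the conventions $q_1 = n$ and $q_{n+1} = 0$. By the classical decomposition of $132$-avoiders on the maximum, this is exactly the minimal suffix truncation keeping the word $132$-avoiding.

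Write $r'_i$ for the number of right-to-left maxima of the tail $R_i \coloneqq \pi_i \dots \pi_{m_i - 1}$. I would first reduce to showing that the pairs $(m_i, r'_i)$ are pairwise distinct. Indeed standardization preserves right-to-left maxima; the inserted $(n+1)$ is always one; no element of $\pi_1, \dots, \pi_{i-1}$ is (each is dominated by $(n+1)$ to its right); and an element of $R_i$ is a right-to-left maximum of $\tau_i$ iff it is one of $R_i$. Hence $s(\tau_i) = m_i - 1 - r'_i$, so the two pairs carry the same information.

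The crux is to show: for $i_1 < i_2$ with $m_{i_1} = m_{i_2} = m$, we have $r'_{i_1} > r'_{i_2}$. In that case $R_{i_2}$ is a suffix of $R_{i_1}$, so every right-to-left maximum of $R_{i_2}$ is also one of $R_{i_1}$, and the task reduces to exhibiting one additional right-to-left maximum of $R_{i_1}$ coming from the extra prefix $\pi_{i_1}, \dots, \pi_{i_2 - 1}$. I would take $j^*$ realizing $\max(\pi_{i_1}, \dots, \pi_{i_2 - 1})$. Since $p_{i_2} = \min(\pi_1, \dots, \pi_{i_2 - 1})$ and $\pi_{j^*}$ is the max of a subset of that set, $\pi_{j^*} \geq p_{i_2}$; meanwhile every entry of $R_{i_2}$ lies strictly below $p_{i_2}$ by the definition of $q_{i_2}$. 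So $\pi_{j^*}$ strictly dominates every element to its right in $R_{i_1}$, giving a fresh right-to-left maximum. The degenerate boundary $R_{i_2} = \emptyset$ (i.e.\ $i_2 = m$) is immediate, since then $r'_{i_2} = 0$ while $R_{i_1}$ is nonempty.

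The main obstacle I anticipate is bookkeeping the boundary conventions ($i = 1$ with empty left prefix, $i_2 = m$ with empty $R_{i_2}$, and $q_i = 0$ when the constraint fails at the first position) so that the single inequality $\pi_{j^*} \geq p_{i_2} > \max(R_{i_2})$ applies uniformly. Fixing the conventions on $p_i$ and $q_i$ at the outset should handle this without resorting to separate subcases.
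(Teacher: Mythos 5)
Your proposal is correct and takes essentially the same approach as the paper's proof: both fix two insertion indices $i<j$ that yield the same output size, observe that the shared tail consists of values strictly smaller than every element of $\pi_i,\dots,\pi_{j-1}$, and exhibit an extra right-to-left maximum of the earlier-insertion result coming from that middle block, giving strictly more right-to-left maxima and hence a different $A(m,s)$. Your $(m_i,r'_i)$ bookkeeping with the explicit $p_i$, $q_i$ conventions is just a slightly more formalised rendering of the same argument.
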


\begin{proof}
	Suppose inserting a new maximum at index $i$ and at index $j$ results in the same number of elements in the output permutations $\tau_1$ and $\tau_2$. Then it suffices to show that $\tau_1$ and $\tau_2$ have a different number of left-to-right maxima. Without loss of generality we assume $i < j$. This means the elements $\pi_1, \dots, \pi_{i-1}$ are all greater than the elements $\pi_i, \pi_{i+1}, \dots, \pi_{j - 1}$, as otherwise an occurrence of $132$ would form upon inserting a new maximum at index $i$ and $\tau_1$ would have less than $j$ elements while $\tau_2$ necessarily has at least $j$ elements.
	
	Now $\tau_1$ and $\tau_2$ must be equal after the first $j$ elements. Suppose that this common tail contains an element greater than any of the elements $\pi_i, \pi_{i+1}, \dots, \pi_{j-1}$. Then the insertion of a new maximum at index $j$ must have created an occurrence of $132$ that is still present in the common tail, which cannot be, so this is not the case. Suppose then this common tail contains $s$ right-to-left maxima.
	
	Now in $\tau_2$ the right-to-left maxima are these $s$ common right-to-left maxima and the newly inserted maximum element. In $\tau_1$ we must have the $s$ common right-to-left maxima, but also at least one right-to-left maxima among $\pi_i, \pi_{i+1}, \dots, \pi_{j-1}$ since the $s$ common right-to-left maxima are smaller than these elements. Furthermore $\tau_1$ also has a right-to-left maxima at index $i$ where the new maximum was inserted. Therefore $\tau_1$ and $\tau_2$ have different numbers of right-to-left maxima which completes our proof.
\end{proof}

\begin{theorem}
Denote the number of edges in the 1324-avoider graph going from an element of $A(n, r)$ to an element of $A(m, s)$ by $E(n, r, m, s)$. That function satisfies
\[E(n, r, m, s) = \begin{cases}
0 \text{ if } r \geq n, s \geq m, n < 1, m < 1, r < 0, s < 0 \text{ or } m > n + 1 \\
0 \text{ if } n < m \text{ and } s < r \\
T_{n - 1, r} \text{ if } n + 1 = m \\
E(n - 1, r, m, s) + E(n, r - 1, m, s) \text{ otherwise}
\end{cases}\]
\end{theorem}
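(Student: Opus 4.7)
The plan is a four-part case analysis matching the four lines of the piecewise definition: first the zero cases, then the growth case $m=n+1$, then the recurrence case. For the zeros: the conditions $n<1$, $m<1$, $r<0$, $s<0$ correspond to empty index sets; for $r\ge n$ (respectively $s\ge m$), the set $A(n,r)$ (resp.\ $A(m,s)$) is empty because the rightmost entry of any nonempty permutation is a right-to-left maximum, so a length-$n$ permutation has at most $n-1$ short values; the bound $m\le n+1$ holds because inserting a single new maximum increases the length by at most one. For the case $n<m$ with $s<r$, the only remaining subcase after excluding $m>n+1$ is $m=n+1$, and one argues directly that any size-preserving insertion into $\pi\in A(n,r)$ produces a permutation with at least $r$ shorts: every short of $\pi$ remains a short in the new permutation, and entries of $\pi$ strictly left of the inserted maximum lose their right-to-left-maximum status, so the new shorts count is at least $r$.

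For the growth case $m=n+1$, I would show that each $\pi\in A(n,r)$ contributes exactly one outgoing edge to $A(n+1,s)$ for every $s\in\{r,r+1,\dots,n\}$. Insertion of the new maximum at position $i$ keeps the result in $\Av(132)$ precisely when the prefix $\pi_1,\dots,\pi_i$ pointwise dominates the suffix $\pi_{i+1},\dots,\pi_n$; I call such $i$ a \emph{split position}. A short induction using the classical decomposition $\pi=\pi_L\cdot\max\cdot\pi_R$ shows that the split positions of $\pi$ are in bijection with $\{0\}\cup\{|\pi_L|+1+j:j\text{ a split position of }\pi_R\}$, which (once one knows that for $132$-avoiders the number of skew components equals the number of right-to-left maxima) gives a total of $n-r+1$ split positions. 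Letting $R(\pi,i)$ denote the number of right-to-left maxima of $\pi$ at positions greater than $i$, the inserted permutation has $1+R(\pi,i)$ right-to-left maxima, hence $s=n-R(\pi,i)$ shorts. The map $i\mapsto R(\pi,i)$ is weakly decreasing along split positions with boundary values $n-r$ at $i=0$ and $0$ at $i=n$, and by Lemma~\ref{differentanr} the $n-r+1$ values it takes must be pairwise distinct, so they exhaust $\{0,1,\dots,n-r\}$. Therefore $s$ realises every value in $\{r,r+1,\dots,n\}$ exactly once as $i$ varies, and summing over $\pi\in A(n,r)$ yields $E(n,r,n+1,s)=|A(n,r)|=T_{n-1,r}$.

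The recurrence case is intended to mirror the Catalan-triangle recurrence $T_{n-1,r}=T_{n-2,r}+T_{n-1,r-1}$ already used to prove $|A(n,r)|=T_{n-1,r}$. The plan is to set up a bijection between the edges $\pi\to\tau$ with $\pi\in A(n,r)$ and $\tau\in A(m,s)$, and the disjoint union of edges $\pi'\to\tau$ with $\pi'\in A(n-1,r)$ and edges $\pi''\to\tau$ with $\pi''\in A(n,r-1)$ (with the same target $\tau$). A natural vertex-level bijection splits $A(n,r)$ by whether the leftmost entry of $\pi$ equals the global maximum: if $\pi_1=n$ then $\pi=n\cdot\pi'$ with $\pi'\in A(n-1,r)$, otherwise $\pi_1$ is a short and deleting it then standardising produces $\pi''\in A(n,r-1)$, since every remaining entry preserves its right-to-left-maximum status. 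The main obstacle will be lifting this vertex bijection to an edge bijection: the ``insert a new maximum, truncate to the longest $\Av(132)$ prefix, then standardise'' operation does not obviously commute with removing the leading entry of $\pi$, because the inserted maximum can create a $132$-pattern whose minimal truncating suffix crosses the leading entry. I expect the proof will require a careful case split on where the new maximum lands relative to the leading entry of $\pi$ and on whether the truncation keeps or discards the leading-entry side of $\pi$, together with a verification that the target class $A(m,s)$ is preserved in each case.
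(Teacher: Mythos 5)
Your treatment of the zero cases and the growth case $m=n+1$ is sound; the ``split position'' argument is a more detailed but essentially equivalent version of the paper's observation that a size-preserving insertion must occur immediately to the right of a right-to-left maximum (or at the very front), and that the resulting number of shorts identifies the insertion point. The paper does not actually invoke Lemma~\ref{differentanr} for this case --- it argues directly that the target class pins down the insertion position --- but your route through that lemma is fine.

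The recurrence case is where the proof breaks down, in two ways. First, your proposed vertex-level bijection does not land where it needs to. You split $A(n,r)$ on whether $\pi_1=n$: removing a leading $n$ does give an element of $A(n-1,r)$, but removing a leading short $\pi_1<n$ and standardising produces a permutation of length $n-1$ with $r-1$ shorts, i.e.\ an element of $A(n-1,r-1)$, not $A(n,r-1)$. The identity $T_{n-1,r}=T_{n-2,r}+T_{n-2,r-1}$ is \emph{false} in general (e.g.\ $T_{3,2}=5$ while $T_{2,2}+T_{2,1}=4$), so this split cannot even recover the set sizes, let alone the edge counts. The recurrence $E(n,r,m,s)=E(n-1,r,m,s)+E(n,r-1,m,s)$ requires one branch that keeps the length fixed at $n$ and decrements $r$. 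The paper instead splits on whether the \emph{last} element of $\pi$ is $1$: if so, deleting it gives an element of $A(n-1,r)$; if the last element is $x>1$, a value-shift that preserves the length $n$ (raising certain values by one and replacing the final entry by the appropriate smaller value) gives an element of $A(n,r-1)$. Second, even with the correct vertex bijection in hand, you explicitly leave the lift to an edge bijection unresolved (``the main obstacle will be lifting this vertex bijection to an edge bijection''). That lift is the substantive content of this case: one must argue that because $m\le n$ forces the insertion to truncate at least one element, the truncated result of inserting into $\pi$ coincides with the truncated result of inserting into its image $\pi'$, so that an edge $\pi\to\tau$ corresponds exactly to an edge $\pi'\to\tau$ with the same target. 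Without this step the recurrence is unproved, and the step you flag as ``the main obstacle'' is indeed where the proof lives.
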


\begin{proof}
	The first case says there is no permutation with more short elements than total elements, there is no way to have a negative number of short elements, there is no way to create the empty permutation from a non-empty one and there is no way gain more than one element by inserting a single element.
	
	Next the second case. If $n < m$ we must have inserted a new maximum element and not removed anything due to occurrences of $132$. If $s < r$ it means we have fewer short elements after insertion than before insertion. The newly inserted maximum cannot be short, and inserting a new maximum cannot make something go from being not a right-to-left maxima to being one. Therefore this case is impossible and the result is zero.
	
	Next we move onto the third case. In this case we have inserted a new maximum and removed no elements afterwards. To remove no elements we must insert our maximum directly right of a right-to-left maxima. If we insert the new maximum to the right of what was previously a right-to-left maxima, it is a short value afterwards. So to land in $A(m, s)$ we know after which right-to-left maxima we have to insert, since we must create the correct number of short values. Thus each element of $A(n, r)$ has exactly one position where inserting lands us in $A(m, s)$, giving the base case.
	
	Finally we just have one case left. Due to Lemma \ref{differentanr} each $\pi \in A(n, r)$ has at most one insertion landing in $A(m, s)$. Thus the value $E(n, r, m, s)$ is simply the number of $\pi \in A(n, r)$ that have such an insertion. Furthermore as $m \leq n$ we must remove at least one element from $\pi$ after this insertion. We split into cases depending on whether $\pi$ ends with a $1$.
	
	Any $\pi'  \in A(n - 1, r)$ can be made into a permutation in $A(n, r)$ by adding a new minimum element at the end and increasing every other element by one, as this introduces a new right-to-left maxima, leaving $r$ unchanged. Furthermore since the insertion into $\pi$ must remove at least element, the corresponding insertion in $\pi'$ gives the same result. Thus if our $\pi \in A(n, r)$ ends with a $1$ we can map it to a unique element in $A(n - 1, r)$. 
	
	Now we move onto the case where $\pi$ does not end with $1$. Suppose the last element of $\pi$ is $x > 1$. Then $x - 1$ is in $\pi$, suppose that the smallest element preceding or equal to $x - 1$ is $y$. Then we create $\pi'$ by increasing every element equal to at least $y$ and less than $x$ by one and replacing the last element by $y$, see Figure~\ref{xshift}.
	
		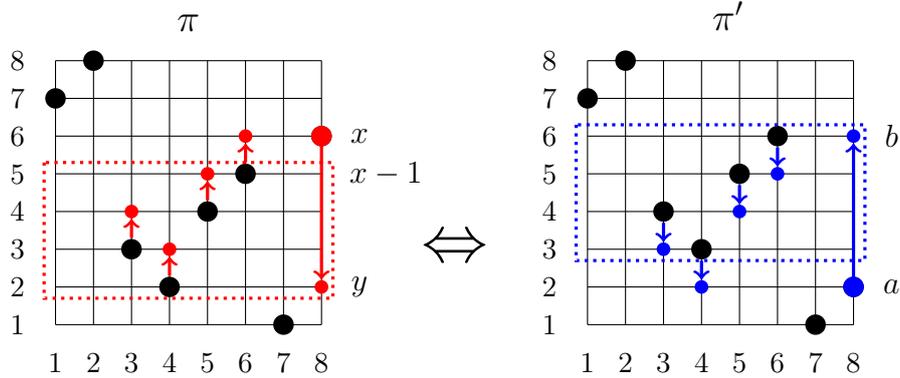
\begin{figure}[h]
		\centering
		\begin{tikzpicture}
			\tikzset{dot/.style={fill=black,circle}}
			
			\foreach\l[count=\y] in {1,2,...,8}
			{
				\draw[thin] (0.5,\y/2) -- (4,\y/2);
				\node at (0,\y/2){\l};
			}
			
			\foreach \x in {1,2,...,8}
			{
				\draw[thin] (\x/2,0.5) -- (\x/2,4);
				\node at (\x/2,0){\x};
			}
			
			\foreach \x/\y in {1/7,2/8,3/3,4/2,5/4,6/5,7/1,8/6}{
				\node[dot, scale=0.75] at (\x/2, \y/2) {};
			}
			
			\node[dot, red, scale=0.75] at (8/2, 6/2) {};
			\node[dot, red, scale=0.5] at (8/2, 2/2) {};
			
			\node (pi) at (2.25, 4.5) {\Large $\pi$};
			\node (x) at (4.5, 6/2) {\large $x$};
			\node (x2) at (4.85, 5/2) {\large $x - 1$};
			\node (y) at (4.5, 2/2) {\large $y$};
			
			
			\draw[->, very thick, red] (8/2,6/2-0.1) -- (8/2,2/2+0.1);
			\draw[very thick, red, dotted] (1/2-0.15,5/2+0.15) -- (1/2-0.15, 2/2-0.15) -- (8/2+0.15, 2/2-0.15) -- (8/2+0.15, 5/2+0.15) -- cycle;
			
			\node[dot, red, scale=0.5] at (3/2, 4/2) {};
			\node[dot, red, scale=0.5] at (4/2, 3/2) {};
			\node[dot, red, scale=0.5] at (5/2, 5/2) {};
			\node[dot, red, scale=0.5] at (6/2, 6/2) {};
			
			\draw[->, very thick, red] (3/2,3/2+0.15) -- (3/2,4/2-0.1);
			\draw[->, very thick, red] (4/2,2/2+0.15) -- (4/2,3/2-0.1);
			\draw[->, very thick, red] (5/2,4/2+0.15) -- (5/2,5/2-0.1);
			\draw[->, very thick, red] (6/2,5/2+0.15) -- (6/2,6/2-0.1);
			
			\foreach\l[count=\y] in {1,2,...,8}
			{
				\draw[thin] (7.5,\y/2) -- (11,\y/2);
				\node at (7,\y/2){\l};
			}
			
			\foreach \x in {1,2,...,8}
			{
				\draw[thin] (\x/2+7,0.5) -- (\x/2+7,4);
				\node at (\x/2+7,0){\x};
			}
			
			\foreach \x/\y in {1/7,2/8,3/4,4/3,5/5,6/6,7/1,8/2} {
				\node[dot, scale=0.75] at (\x/2+7, \y/2) {};
			}
			
			\node (pi2) at (2.35+7, 4.6) {\Large $\pi'$};
			
			\node (b) at (4.5+7, 6/2) {\large $b$};
			\node (a) at (4.5+7, 2/2) {\large $a$};
			
			\node[dot, blue, scale=0.75] at (8/2+7, 2/2) {};
			\node[dot, blue, scale=0.5] at (8/2+7, 6/2) {};
			\draw[<-, very thick, blue] (8/2+7,6/2-0.1) -- (8/2+7,2/2);
			
			\draw[->, very thick, blue] (3/2+7,4/2-0.15) -- (3/2+7,3/2+0.1);
			\draw[->, very thick, blue] (4/2+7,3/2-0.15) -- (4/2+7,2/2+0.1);
			\draw[->, very thick, blue] (5/2+7,5/2-0.15) -- (5/2+7,4/2+0.1);
			\draw[->, very thick, blue] (6/2+7,6/2-0.15) -- (6/2+7,5/2+0.1);
			
			\node[dot, blue, scale=0.5] at (3/2+7, 3/2) {};
			\node[dot, blue, scale=0.5] at (4/2+7, 2/2) {};
			\node[dot, blue, scale=0.5] at (5/2+7, 4/2) {};
			\node[dot, blue, scale=0.5] at (6/2+7, 5/2) {};
			
			\draw[very thick, blue, dotted] (1/2-0.15+7,6/2+0.15) -- (1/2-0.15+7, 3/2-0.15) -- (8/2+0.15+7, 3/2-0.15) -- (8/2+0.15+7, 6/2+0.15) -- cycle;
			
			\node (pi2) at (5.75, 1.5) {\Huge $\Leftrightarrow$};
			
		\end{tikzpicture}
		\caption{Map from $\pi$ to $\pi'$ when the last element is $> 1$}
		\label{xshift}
	\end{figure}
	
	Clearly no value $z > x$ can exist between $x - 1$ and $x$ in $\pi$ as $x - 1, z, x$ would be an occurrence of $132$. So $x - 1$ becomes a right-to-left maxima in $\pi'$ and was not one before in $\pi$, while $x$ was and remains a right-to-left maxima. Furthermore there can be no element $z$ satisfying $z > x - 1$ between $x - 1$ and $x$ as otherwise $y, z, x$ would be an occurrence of $132$. So no other element between becomes a right-to-left maxima, so the number of short values decreases by exactly one. Thus $\pi' \in A(n, r - 1)$.
	
	Going the other way we can start with $\pi \in A(n, r - 1)$. If $\pi$ is the increasing permutation, any insertion results in no element being removed afterwards, so it cannot end in $A(m, s)$. Therefore we can discount the case where $\pi$ is increasing, in the other case we have some last element $a$ and previous right-to-left maxima $b$. Then we can get the inverse by decreasing every element with value between $a$ and $b$ by one and making the last element $b$, completing the proof.
\end{proof}

This could likely be reduced to a recurrence in less than four variables. But for our purposes we need all the values $E(n, r, m, s)$ anyway and this calculates each of them in constant time, so this will suffice.

If we repeat our earlier constructions to create the avoider graph for $1324$ and take the quotient with respect to $A(n, r)$, checking some examples by hand reveals that the walks do not correspond one-to-one like before. In an attempt to still make the construction work, we introduce one new feature. The edge from $A(n, r)$ to $A(m, s)$ will have weight $E(n, r, m, s) / |A(n, r)|$, where we omit zero weight edges. A walk will then no longer contribute $1$ to the total sum, but the product of the weights on the edges it travels along. This is done to replicate the earlier behaviour of forcing two coordinates belonging to the same $A(n, r)$ to have the same weight, just without the luxury of always having the same number of edges going out of a partition to each outneighbour. For this reason we choose the weight  $E(n, r, m, s) / |A(n, r)|$, as it is the ratio of vertices in $A(n, r)$ having an edge pointing to $A(m, s)$. In previous cases, this ratio was always one. To distinguish these weighted walks from the normal ones, we denote the sum of the length $k$ walks in the graph with cutoff $n$ by $\widetilde{W}_{n,k}$ in the weighted case. We conjecture, but unfortunately do not prove, that these weighted walks provide a lower bound for the unweighted ones. 

Figure \ref{wtable} shows $\widetilde{W}_{n,k}/W_{n,k}$ for some values of $n$ and $k$. We start at $n = 4$ and $k = 6$ as all values before that point are simply $1$. All values for $k \leq 100$ and $n \leq 15$ have been confirmed to be $\leq 1$. As all paths of length $\leq \ell$ steps are all the same in graphs with cutoff $\geq \ell$, this means the conjecture is true for $k \leq 15$ for all $n$.

\begin{figure}[t]
\begin{center}
\begin{tabular}{|l|ccccc|}
	\hline
	\diaghead{\theadfont $n$ \, \,}{\, \, $k$}{$n$ \, \,}
	& \thead{6} & \thead{7} & \thead{8} & \thead{9} & \thead{10} \\ \hline
	\thead{4} & 0.999906 & 0.999893 & 0.999819 & 0.999747 & 0.999677 \\
	\thead{5} & 0.999971 & 0.999833 & 0.999605 & 0.999333 & 0.999053 \\
	\thead{6} & 0.999974 & 0.999864 & 0.999572 & 0.999111 & 0.998545 \\
	\thead{7} & 0.999975 & 0.999873 & 0.999622 & 0.999126 & 0.998388 \\
	\thead{8} & 0.999975 & 0.999875 & 0.999638 & 0.999192 & 0.998450 \\
	\thead{9} & 0.999975 & 0.999875 & 0.999641 & 0.999212 & 0.998523 \\ \hline
\end{tabular}
\end{center}
\caption{Values of $\widetilde{W}_{n,k}/W_{n,k}$ for various $n, k$.}
\label{wtable}
\end{figure}

Formally speaking, the conjecture to be proven is the following.

\begin{conjecture}
	\label{conj}
The growth rate of walks in the grouped 1324-avoiding graph provide a lower bound for the Stanley-Wilf limit of $1324$-avoiding permutations. That is to say $\widetilde{W}_{n, k}(1324) \leq W_{n,k}(1324)$ for all $n, k \geq 0$.
\end{conjecture}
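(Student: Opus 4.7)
The strategy is induction on walk length, using a per-part strengthening of the desired inequality. Write $F^{(k)}_\pi$ for the number of walks of length $k-1$ from the trivial permutation to $\pi$ in the $1324$-avoider graph, and $G^{(k)}_Q$ for the total weight of length-$(k-1)$ walks from $A(1,0)$ to the part $Q = A(n,r)$ in the weighted quotient, so that $W_{n,k}(1324) = \sum_\pi F^{(k)}_\pi$ and $\widetilde W_{n,k}(1324) = \sum_Q G^{(k)}_Q$. The aggregate inequality follows by summation from the per-part bound
\[G^{(k)}_Q \;\leq\; \sum_{\tau \in Q} F^{(k)}_\tau \qquad (\star)\]
for every part $Q$ and every $k \geq 1$. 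The base case $k=1$ is immediate, since both sides equal the indicator of $Q = A(1,0)$.

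For the inductive step, unfold the recurrences and use Lemma~\ref{differentanr}: every $\tau \in Q$ has at most one out-neighbour in any fixed part $P$, so writing $S^Q_P = \{\tau \in Q : \tau \text{ has an out-neighbour in } P\}$ one has $|S^Q_P| = E(Q,P)$, and
\[G^{(k+1)}_P = \sum_Q \frac{E(Q,P)}{|Q|}\, G^{(k)}_Q, \qquad \sum_{\pi \in P} F^{(k+1)}_\pi = \sum_Q \sum_{\tau \in S^Q_P} F^{(k)}_\tau.\]
Applying $(\star)$ inside the outer sum reduces the inductive step to the correlation inequality
\[\frac{E(Q,P)}{|Q|}\,\sum_{\tau \in Q} F^{(k)}_\tau \;\leq\; \sum_{\tau \in S^Q_P} F^{(k)}_\tau\]
for every pair $(Q,P)$; equivalently, the average of $F^{(k)}$ over $S^Q_P$ is at least its average over $Q$, i.e.\ $F^{(k)}_\tau$ and $\mathbf{1}[\tau \in S^Q_P]$ are non-negatively correlated under the uniform measure on $Q$.

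To establish this correlation I would look for a natural partial order $\preceq$ on each $A(n,r)$, plausibly extracted from the binary tree decomposition used in the proof of Theorem~2, with respect to which both $F^{(k)}_\tau$ and $\mathbf{1}[\tau \in S^Q_P]$ are monotone; an FKG-type inequality on the resulting distributive lattice would then supply the positive correlation. Monotonicity of the indicator should be accessible by tracking how the trim position depends on the tree structure, in the spirit of the last-element case analysis in the proof of the edge-count theorem. Monotonicity of $F^{(k)}$ is more delicate, since it aggregates over all walks from the trivial permutation, and one would have to prove it inductively in parallel with $(\star)$.

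The principal obstacle is that $F^{(k)}_\tau$ encodes the entire multi-step walk history and has no obvious reason to respect any combinatorial order on $\tau$. A fallback is to attack the aggregate inequality directly via the telescoping identity
\[A^k - (PA)^k \;=\; \sum_{j=0}^{k-1} (PA)^j (I - P)\, A^{k-j},\]
where $A$ is the adjacency matrix of the avoider graph and $P = \Pi^T D^{-1} \Pi$ is the orthogonal projection onto part-constant vectors (with $\Pi$ the partition matrix and $D = \mathrm{diag}(|A(n,r)|)$); the summand at $j = k-1$ vanishes because $A\mathbf{1}$ is part-constant (every vertex of $A(n,r)$ has out-degree $n+1$), but the remaining summands carry mixed signs and controlling their aggregate at the trivial vertex appears to require structural input of comparable strength to the correlation inequality above. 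Either route, the essential difficulty is to control how walk-history information interacts with the non-equitability of the partition $\{A(n,r)\}$ — precisely the feature that makes the weights $E(Q,P)/|Q|$ proper fractions rather than integers.
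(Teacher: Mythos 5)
The statement you were asked to prove is, in the paper, an \emph{unproven conjecture}: the author explicitly writes ``We conjecture, but unfortunately do not prove, that these weighted walks provide a lower bound for the unweighted ones,'' and supports it only with numerical evidence (the table of ratios $\widetilde W_{n,k}/W_{n,k}$, verification for $k \leq 100$, $n \leq 15$, and the aggregate case $n = k$ up to $50$). There is therefore no proof in the paper to compare against, and what you have written is likewise not a proof; to your credit you say so plainly. The structural work you do is nonetheless correct and a genuine sharpening of the problem: the passage to the per-part inequality $G^{(k)}_Q \leq \sum_{\tau \in Q} F^{(k)}_\tau$, the one-step unfolding using Lemma~\ref{differentanr} to get $|S^Q_P| = E(Q,P)$, and the resulting reduction of the inductive step to the correlation inequality
\[
\frac{E(Q,P)}{|Q|}\sum_{\tau \in Q} F^{(k)}_\tau \;\leq\; \sum_{\tau \in S^Q_P} F^{(k)}_\tau
\]
are all valid deductions, and the telescoping identity in your fallback is algebraically correct.

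The gap is the correlation inequality itself, which you neither prove nor give compelling reason to believe, and two further risks are worth naming explicitly. First, your per-part strengthening is genuinely stronger than Conjecture~\ref{conj}: the conjecture only asks that the \emph{sum over all $P$} go the right way, so even if the conjecture is true the per-part version could fail for some triple $(Q,P,k)$ and this route would collapse without implying anything about the conjecture. Before committing to an FKG-style proof of the correlation inequality, it would be wise to test the per-part inequality computationally for small $k$; if it fails anywhere, the reduction is a dead end. Second, the FKG route needs a distributive lattice order on $A(n,r)$ under which both $\mathbf{1}[\tau \in S^Q_P]$ and $F^{(k)}_\tau$ are monotone; you identify the latter monotonicity as the weak point, correctly, since $F^{(k)}_\tau$ encodes the full walk history to $\tau$ and there is no a priori reason for it to respect a static combinatorial order. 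The observation that $A\mathbf{1}$ is part-constant also deserves a check near the cutoff, where the size-capping trim can in principle identify images of distinct insertions. In short: you have reformulated the conjecture cleanly, not resolved it, which is consistent with its status in the paper.
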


Now if this were true, we get the following for free based on the computations involved. Though the above conjecture is in fact quite strong, as the corollary only needs the case $n = k$. The case $n = k$ has been verified to be true for $n \leq 50$ using the known values of $|\Av_n(1324)|$, provided by Conway, Guttmann and Zinn-Justin \cite{estimate}. A plot of the true value compared to the esimate can be seen in Figure~\ref{estimate}. The eagle-eyed viewer might see that the two values diverge very slightly as they get to index $50$. On a non-logarithmic plot the difference somewhat clearer as the estimate is about 30\% smaller than the true value at $n = 50$.

\begin{figure}[htb]
	\centering
	\includegraphics[width=\textwidth]{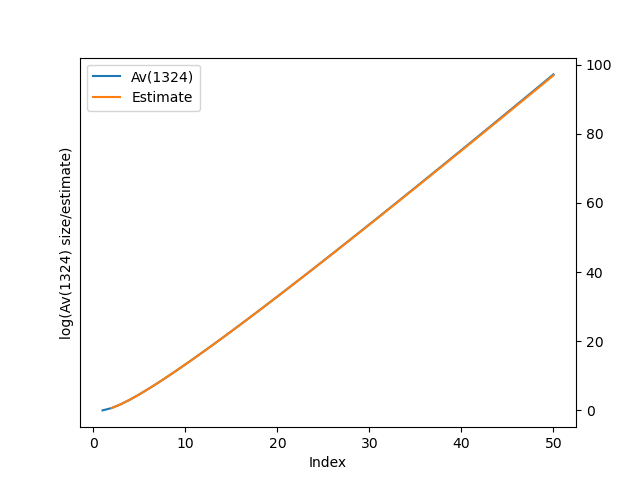}
	\caption{Logarithm of estimate of $|\Av_n(1324)|$ compared to true value}
	\label{estimate}
\end{figure}

\begin{corollary}
If Conjecture \ref{conj} is true, the growth rate of $1324$-avoiding permutations is at least $10.418$.
\end{corollary}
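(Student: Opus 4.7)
The plan is to use the Collatz-Wielandt formula to produce a rigorous lower bound $\rho_N \geq 10.418$ on the Perron-Frobenius eigenvalue of the weighted quotient adjacency matrix $M_N$ at some large cutoff $N$, and then transfer this bound to $L(1324)$ via Conjecture~\ref{conj}. Iterating any Collatz-Wielandt witness $v$ satisfying $M_N v \geq 10.418 \cdot v$ forces $\widetilde{W}_{N, k}(1324) = \mathbf{1}^T M_N^{k-1} e_{A(1,0)}$ to grow at rate $\rho_N$ in $k$, and under the conjecture's $n = k$ instance $\widetilde{W}_{N, N}(1324) \leq |\Av_N(1324)|$; taking $N$-th roots and passing to the limit yields $L(1324) \geq 10.418$.

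Concretely, first fix a cutoff $N$ chosen large enough that the numerical experiments summarised in Figure~\ref{estimate} indicate $\rho_N$ comfortably exceeds $10.418$. Second, build $M_N$ in exact rational arithmetic: it has $O(N^2)$ rows and columns indexed by the classes $A(n, r)$, with entries $E(n, r, m, s)/T_{n-1, r}$ computed from the recurrences already established. Third, restrict $M_N$ to its strongly connected component containing $A(1, 0)$, in the spirit of the $\Av(2134)$ example where the increasing-permutation vertices were omitted, so that the Perron-Frobenius theorem applies cleanly. Fourth, run the power method to approximate the dominant eigenvector, round the output to a rational vector $v$ with sufficient precision, and verify $M_N v \geq 10.418 \cdot v$ componentwise in exact rational arithmetic. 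Collatz-Wielandt then immediately delivers $\rho_N \geq 10.418$.

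The main obstacle will be the rigorous certification in the final step. Since $10.418$ is only marginally below $\rho_N$ at accessible cutoffs, the slack $M_N v - 10.418 \cdot v$ is small, so the power-method approximation must be iterated enough that rounding to rationals does not violate the componentwise inequality; either sufficiently high precision or verified interval arithmetic is required. Managing the size of the rational entries of $M_N$, whose denominators $T_{n-1, r}$ grow like Catalan numbers, adds a further computational burden. A smaller technical point is controlling the leading Perron constant $c_N$ in $\widetilde{W}_{N, k} \sim c_N \rho_N^k$ so that $\widetilde{W}_{N, N}^{1/N}$ tracks $\rho_N$ as $N$ grows; this reduces to an inner-product bound on the Perron eigenvectors at fixed $N$ and poses no conceptual difficulty beyond the arithmetic.
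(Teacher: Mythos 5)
Your proposal captures the same approach the paper uses: build the weighted quotient matrix $M_N$ at a large cutoff (the paper uses $N = 220$), approximate the dominant eigenvector via the power method, and certify $\rho_N \geq 10.418$ with the Collatz--Wielandt formula, then invoke the conjecture to transfer this to $L(1324)$. The paper gives essentially no proof beyond a reference to the computations, so your reconstruction fills in the right details, and your emphasis on certified (interval or rational) arithmetic for the Collatz--Wielandt witness is a sensible refinement of what the paper describes informally.

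The one place where you are too casual is the transfer step. You write that the $n=k$ instance plus taking $N$th roots ``yields $L(1324)\geq 10.418$,'' and defer the needed control of the Perron constant $c_N$ to an inner-product bound ``at fixed $N$ \dots\ no conceptual difficulty.'' But the statement you need is not at fixed $N$: it is that $\widetilde{W}_{N,N}^{1/N}$ stays above $10.418$ \emph{as $N\to\infty$}, i.e.\ that $\widetilde{W}_{n,n}^{1/n}$ eventually exceeds $\rho_{220}$. That requires a comparison such as $\widetilde{W}_{n,n}\geq\widetilde{W}_{220,n}$ for $n\geq 220$, which is not automatic because the cutoff-$220$ quotient graph has truncated (redirected) outgoing edges from size-$220$ vertices that the cutoff-$n$ graph does not, so the weighted walk counts are not nested in an obvious way; and the Appendix data ($\widetilde{W}_{50,50}/\widetilde{W}_{49,49}\approx 8.9$) show the diagonal ratios are still well below $10.418$ at $n=50$, so some genuine asymptotic argument is required. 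A cleaner and gap-free route, which is implicit in the paper's Section~2, is to fix $N$ and use the conjecture for that fixed $N$ and all $k$: $\widetilde{W}_{N,k}\leq W_{N,k}$ gives $\rho_N\leq\lambda_N$ by comparing growth rates in $k$, and $\lambda_N\leq L(1324)$ by the walk-injection argument already established for cutoff graphs. This sidesteps the diagonal subtlety entirely, though it uses more of the conjecture than the $n=k$ case alone; either your proof should adopt that route, or it should explicitly establish the asymptotic monotonicity of $\widetilde{W}_{n,n}^{1/n}$.
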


The computed value for $n = 220$ is $\geq 10.418$, though the true bound for $n = 220$ is likely a bit better. For the largest values of $n$, it was difficult to have the power method converge fully due to computation constraints. But thanks to the Collatz-Wielandt formula we can still check and be sure that the bound $10.418$ is definitely correct. If we plot the three examples we have looked at, it seems like the converge speed as a function of the cutoff $N$ near the limit seems to be approximately $N^{-3/2}$, see Figure~\ref{convergence}. 

\begin{figure}[htb]
	\centering
	\includegraphics[width=\textwidth]{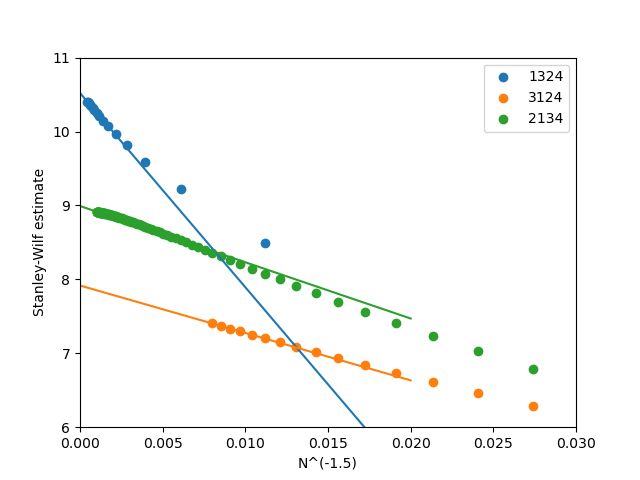}
	\caption{Plot of convergence of estimates to their limit}
	\label{convergence}
\end{figure}

It seems that this method is a genuine lower bound and does not tend to the conjectured growth rate of $11.6$. But this gap could partially be a result of our limited floating-point precision method failing to find good solutions for larger cases. A guess based on how quickly the value seems to be converging for $n \leq 220$ is that the limit is about $10.65$. So with more computation the value $10.418$ could be improved, but maybe not by much. With much more computation time this could also be repeated using higher precision floating points, which could reveal how much of this gap is a result of computational constraints.

\section{Open questions}

\begin{itemize}
	\item What properties do these sequences of increasing graphs need to satisfy for the growth rates of their walks to converge to the growth rate of walks in the limit graph? Since the graphs are directed there are some obvious counterexamples to the limit always being correct if we place no constraints. For example an infinite binary tree with edges oriented from parent to child would have growth rate $0$ for any finite section, but growth rate $2$ in the limit. Does it suffice that each graph in the sequence is strongly connected? This question could also be equivalently posed in terms of automata to fit with earlier work, like the paper by Albert, Elder, Rechnitzer, Wextcott and Zabrocki \cite{lockseq}.
	
	\item For a stationary distribution of the random walks on one of these (non-truncated) graphs, does there always exist an $\alpha \in \mathbb{R}$ and integer valued function $f$ on permutations so the coordinate for $\pi$ is $\alpha f(\pi)/|\pi|!$ for all $\pi$?
	
	\item What is the theoretical connection between the stationary distributions of the random walks and the Perron-Frobenius eigenvector of the graph? They are both eigenvectors of a matrix related to the graph, but not of the same matrix. Yet they seem to share some structural properties, for example being constant on the same partition in some of the simpler examples considered.
	
	\item Is there some way to compute or approximate the Perron-Frobenius eigenvector in the infinite graph? Simply computing a coordinate for larger and larger cutoffs $N$ does not even clearly reveal simple things like whether two coordinates are equal in the limit due to ``noise'' emanating from where the graph was truncated.
	
	\item Is there some other partition that gives a better bound in the case of $1324$-avoiding permutations, while still being computationally feasible?
\end{itemize}

\section{Acknowledgements}

Figuring out the formula for the weights for the random walks was done using the FindStat \cite{FindStat} tool.

\section{Appendix}

The calculated values of $\widetilde{W}_{n,n}$ for $n = 1, \dots, 50$ follow in Figure~\ref{appenddata}. These can be compared against the known values of $|\Av_n(1324)|$ from Conway, Guttmann and Zinn-Justin \cite{estimate}.

\begin{figure}[ht]
\centering
\begin{tabular}{|r|l|}
	\hline
	$n$ & $\widetilde{W}_{n,n}$ \\ \hline
	$1$ & \texttt{0} \\
	$2$ & \texttt{2} \\
	$3$ & \texttt{6} \\
	$4$ & \texttt{23} \\
	$5$ & \texttt{103} \\
	$6$ & \texttt{513} \\
	$7$ & \texttt{2762} \\
	$8$ & \texttt{15792.6} \\
	$9$ & \texttt{94764.14143} \\
	$10$ & \texttt{591737.5476} \\
	$11$ & \texttt{3821110.811} \\
	$12$ & \texttt{25394500.09} \\
	$13$ & \texttt{173036190} \\
	$14$ & \texttt{1205205579} \\
	$15$ & \texttt{8559183937} \\
	$16$ & \texttt{6.185211294e+10} \\
	$17$ & \texttt{4.540198358e+11} \\
	$18$ & \texttt{3.380289108e+12} \\
	$19$ & \texttt{2.549431847e+13} \\
	$20$ & \texttt{1.945671483e+14} \\
	$21$ & \texttt{1.501133141e+15} \\
	$22$ & \texttt{1.169850253e+16} \\
	$23$ & \texttt{9.202016022e+16} \\
	$24$ & \texttt{7.301188881e+17} \\
	$25$ & \texttt{5.839947044e+18} \\ \hline
\end{tabular}
\begin{tabular}{|r|l|}
	\hline
	$n$ & $\widetilde{W}_{n,n}$ \\ \hline
	$26$ & \texttt{4.706543085e+19} \\
	$27$ & \texttt{3.820046813e+20} \\
	$28$ & \texttt{3.121218687e+21} \\
	$29$ & \texttt{2.566262389e+22} \\
	$30$ & \texttt{2.122495423e+23} \\
	$31$ & \texttt{1.765314252e+24} \\
	$32$ & \texttt{1.476043875e+25} \\
	$33$ & \texttt{1.240398630e+26} \\
	$34$ & \texttt{1.047369958e+27} \\
	$35$ & \texttt{8.884154482e+27} \\
	$36$ & \texttt{7.568625284e+28} \\
	$37$ & \texttt{6.474664471e+29} \\
	$38$ & \texttt{5.560801289e+30} \\
	$39$ & \texttt{4.794057984e+31} \\
	$40$ & \texttt{4.148068322e+32} \\
	$41$ & \texttt{3.601642999e+33} \\
	$42$ & \texttt{3.137668938e+34} \\
	$43$ & \texttt{2.742259729e+35} \\
	$44$ & \texttt{2.404096157e+36} \\
	$45$ & \texttt{2.113911500e+37} \\
	$46$ & \texttt{1.864087697e+38} \\
	$47$ & \texttt{1.648336933e+39} \\
	$48$ & \texttt{1.461449357e+40} \\
	$49$ & \texttt{1.299092264e+41} \\
	$50$ & \texttt{1.157649550e+42} \\ \hline
\end{tabular}
\caption{$\widetilde{W}_{n,n}$ for $n = 1, \dots, 50$}
\label{appenddata}
\end{figure}

\bibliographystyle{acm}

\bibliography{machinepaper}

\end{document}